\documentclass[reqno,12pt,a4paper]{amsart}

\usepackage{amssymb,amsmath,amstext,amsthm,amsfonts,amscd,bbold,wasysym}
\usepackage{euscript,a4wide}
\usepackage{graphicx}
\usepackage{color,enumerate}
\usepackage[shortlabels]{enumitem} 
\usepackage[backref]{hyperref}  

\theoremstyle{plain}
\newtheorem{mainthm}{Theorem}

\newtheorem{mainclly}[mainthm]{Corollary}
\newtheorem{theorem}{Theorem}[section]

\newtheorem{corollary}[theorem]{Corollary}
\newtheorem{lemma}[theorem]{Lemma}
\newtheorem{conj}{Conjecture}

\newtheorem{definition}[theorem]{Definition}
\newtheorem{example}[theorem]{Example}

\newtheorem{remark}[theorem]{Remark}

\newtheorem{claim}{Claim}

\newcommand{\RR}{\mathbb{R}}

\newcommand{\DD}{\mathbb{D}}
\newcommand{\sS}{\mathbb{S}}
\newcommand{\TT}{\mathbb{T}}
\newcommand{\ZZ}{\mathbb{Z}}

\newcommand{\qand}{\quad\text{and}\quad}

\newcommand{\vfi}{\varphi}

\newcommand{\Mundo}{\mathfrak{X}^{1}(M)}

\newcommand{\fX}{{\mathfrak{X}}}

\newcommand{\ov}{\overline}

\newcommand{\SA}{{\mathcal A}}

\newcommand{\cC}{{\mathcal C}}

\newcommand{\cF}{{\mathcal F}}

\newcommand{\SO}{{\mathcal O}}

\renewcommand{\SS}{{\mathcal S}}

\newcommand{\SU}{{\mathcal U}}


\newcommand{\wt}{\widetilde}

\newcommand{\F}{\EuScript{F}}

\newcommand{\wh}{\widehat}

\DeclareMathOperator{\diver}{div}

\newcommand{\close}{\operatorname{Closure}}
\newcommand{\diag}{\operatorname{diag}}

\newcommand{\Leb}{\operatorname{Leb}}
\newcommand{\leb}{\operatorname{Leb}}
\newcommand{\m}{\operatorname{Leb}}
\newcommand{\closure}{\operatorname{clos}}

\newcommand{\supp}{\operatorname{supp}}
\newcommand{\crit}{\operatorname{Crit}}
\newcommand{\sing}{\operatorname{Sing}}

\title[Nonuniformly Sectional Expanding Systems]{Mostly Nonuniformly Sectional Expanding Systems}

\date{\today}

\author{Vitor Araujo and Luciana Salgado}

\thanks{V.A. and L.S. are partially supported by Projeto Universal
  CNPq-Brazil (grant 401737/2025-0).  V.A. was also partially
  supported by CNPq-Brazil (grant 304047/2023-6). L.S. also was
  partially supported by FAPERJ-Funda\c c\~ao Carlos Chagas Filho de
  Amparo \`a Pesquisa do Estado do Rio de Janeiro Projects
  APQ1-E-26/211.690/2021 SEI-260003/015270/2021 and
  JCNE-E-26/200.271/2023 SEI-260003/000640/2023, by Coordena\c c\~ao
  de Aperfei\c coamento de Pessoal de N\'ivel Superior CAPES — Finance
  Code 001 and PROEXT-PG project Dynamic Women - Din\^amicas,
  CNPq-Brazil (grant Projeto Universal 404943/2023-3).}

\address[V.A.]{Universidade Federal da Bahia,
Instituto de Matem\'atica\\
Av. Adhemar de Barros, S/N , Ondina,
40170-110 - Salvador-BA-Brazil}
\email{vitor.d.araujo@ufba.br \text{or} vitor.araujo.im.ufba@gmail.com}
\urladdr{https://sites.google.com/view/vitor-araujo-ime-ufba}

\address[L.S.]{Universidade Federal do Rio de Janeiro, Instituto de
   Matem\'atica\\
   Avenida Athos da Silveira Ramos 149 Cidade Universit\'aria, P.O. Box 68530, 
   21941-909 Rio de Janeiro-RJ-Brazil }
 \email{lsalgado@im.ufrj.br, lucianasalgado@ufrj.br}
 \urladdr{http://www.im.ufrj.br/~lsalgado}

 \subjclass[2010]{Primary: 37C10. Secondary: 37D30, 37D25, 37C40.}

 \keywords{Nonuniform sectional hyperbolicity,
   multisingular hyperbolicity, asymptotical sectional hyperbolicity}

\begin{document}

\maketitle

\begin{abstract}
  We introduce the notion of \emph{mostly nonuniform sectional
    expanding} (MNUSE) for singular flows which encompasses the
  notions of sectional hyperbolicity, asymptotically sectional and
  multisingular hyperbolicity. We exhibit an example of a vector field
  of class $C^r, r > 1$, whose flow exhibits a nonuniformly sectional
  hyperbolic set satisfying MNUSE, which is neither sectional
  hyperbolic nor asymptotically sectional hyperbolic.

  We obtain sufficient conditions for the existence of physical/SRB
  measures for asymptotically sectionally hyperbolic attracting sets
  with any finite co-dimension, extending the co-dimension two case.

  We provide examples of such attractors, either with non-sectional
  hyperbolic equilibria, or with sectional-hyperbolic equilibria of
  mixed type, i.e., with a Lorenz-like singularity together with a
  Rovella-like singularity in a transitive set. These are
  higher-dimensional versions of contracting Lorenz-like attractors
  (also known as Rovella-like attractors) to which we apply our
  criteria to obtain a physical/SRB measure with full ergodic basin.

  We also adapt the previous examples to obtain higher co-dimensional
  (i.e. with central direction of dimension greater than $2$)
  non-uniformly sectional expanding attractors.
\end{abstract}

\tableofcontents

\section{Introduction and statement of results}
\label{sec:introduction}

A very important notion of (weak) hyperbolicity for flows has been
given by Morales, Pacifico and Pujals~\cite{MPP99} in the end of
1990's. The so called \emph{singular hyperbolicity} emerged from the
need of understanding the dynamical behavior of the famous Lorenz's
system and related ones. In this three-dimensional case, there is a
partially hyperbolic splitting of the tangent bundle into a pair of
continuous invariant subbundles $E\oplus F$, such that $E$ is
one-dimensional and uniformly contracting; and $F$, the central one,
is uniformly volume expanding (meaning that jacobian of the flow along
$F$ increases exponentially in time).  For a two-dimensional $F$, this
is the same as area expansion.

Labarca and Pacifico~\cite{LP86} introduced the singular horseshoe, a
variation of the geometric Lorenz attractor conceived to disprove
Palis-Smale's stability conjecture for flows on manifolds with
boundary. Later, Rovella~\cite{Ro93} introduced another variation of
geometric Lorenz attractor, replacing the singularity by one with a
central contracting condition. These models are known as contracting
Lorenz models or simply Rovella attractors, and their singularities
known as ``Rovella-like''.

In order to extend these notions to higher dimensional systems,
Morales and Metzger~\cite{MeMor08} introduced the so called
\emph{($2$-)sectional hyperbolicity} (SH), which means uniform area
expansion along any $2$-plane in the central subbundle of a partially
hyperbolic splitting of a singular flow.

In higher dimensions, the notion of singular and ($2$-)sectional
hyperbolicity are quite different, since we can have volume expansion
without having area expansion; consider e.g. the ``wild strange
attractors'' for singular flows from Shilnikov and Turaev~\cite{ST98}.

After this, Bonatti and da Luz in~\cite{BondaLuz21}, introduced the
notion of \emph{multisingular hyperbolicity} (MSH), attempting to
characterize the dynamical behavior of \emph{star flows}. They proved
that multisingular flows are star flows, and that there is a $C^1$
open dense set of star flows which are multisingular. Recently,
Crovisier et al~\cite{Crovetal2020} provided a similar notion
equivalent to multisingular hyperbolicity under certain conditions.

We note that all of the concepts of hyperbolicity given before allow
for flows with (hyperbolic) singularities accumulated by regular
orbits.

To better describe the dynamics of singular flows, a first notion of
nonunifom sectional hyperbolicity for flows has already appeared
in~\cite{ArbSal2011}, as an extension of a similar one for
diffeomorphisms by Castro in~\cite{Castro2011}, to the sectional
hyperbolic context.  The authors obtained the sectional hyperbolicity
of the nonwandering set $C^1$ generically among flows with nonuniform
sectional hyperbolic critical set.

However, this notion, as announced, is too strong, since it cannot
hold for the persistent hyperbolicity of the contracting Lorenz (also
known as Rovella-like) attractors~\cite{Ro93}. Morales and San Martin
introduced~\cite{MorSM17} the definition of asymptotically sectional
hyperbolicity (ASH) which is suitable to describe
this type of singular flows.

Here, we give a new definition of \emph{mostly nonuniformly sectional
  expanding systems} (MNUSE) which encompasses all the previous ones. 

For flows generated by $C^2$ vector fields we show that sectional
hyperbolic attracting sets, and asymptotically sectional hyperbolic
attracting sets with two-dimensional central bundle, satisfy MNUSE.
Moreover, these sets support a physical/SRB measure, which is unique
if the set is transitive.

We also describe an example of a vector field of class $C^r, r > 1$,
whose flow exhibits a nonuniformly sectional hyperbolic set satisfying
MNUSE, which is neither sectional hyperbolic nor asymptotically
sectional hyperbolic.

We obtain sufficient conditions for the existence of
physical/SRB measures for asymptotic sectionally hyperbolic attracting
sets with any finite co-dimension, thus generalizing the known results
for codimension two, allowing us to handle attracting sets with slow
recurrence to equilibria and weak asymptotic sectional expansion.

We construct new examples of higher co-dimensional attractors that are
asymptotically sectionally hyperbolic and either contain
non-sectionally hyperbolic equilibria; or combine Lorenz-like and
Rovella-like singularities in the same transitive set; to which we
apply our existence result for physical/SRB measures with full ergodic
basins.

Moreover, we adapt these constructions to provide attractors with
non-uniformly sectionally expanding central direction in higher
co-dimension, that is, the central subbundle may have any dimension
greater than $2$.

In the next subsection, we introduce our main results whose definitions
are given in Section~\ref{sec:prelim-definit}.

\subsection{Statement of results}
\label{sec:statement-results}

Let $M$ be a $C^\infty$ compact connected manifold with dimension
$\dim M=n$, endowed with a Riemannian metric, induced distance $d$ and
volume form (Lebesgue measure) $\m$.  Let also $\fX^r(M)$, $r\ge1$, be
the set of $C^r$ vector fields on $M$ endowed with the $C^r$ topology
and denote by $X_t$ the flow generated by $X\in\fX^r(M)$. We assume
that $X$ is inwardly transverse to the boundary $\partial M$ if
non-empty.

We first show that several main classes of singular attracting sets,
whose definitions we present in Section~\ref{sec:prelim-definit}, are
MNUSE sets for smooth vector fields.

\begin{mainthm}\label{mthm:SH-ASH-NUSH}
  Let $\Lambda \subset M$ be a compact invariant subset for a $C^2$
  vector field. Then the following classes of attracting sets are
  mostly nonuniformly sectional hyperbolic (MNUSE) sets:
  \begin{enumerate}
  \item Sectional hyperbolic (SH) attracting sets;
  \item Asymptotically sectional hyperbolic (ASH) attracting sets whose
    singularities are hyperbolic of saddle-type with either
    \begin{enumerate}
    \item two-dimensional center-unstable bundle; or
    \item all singular-hyperbolic singularities; or
    \item a positive volume subset of weak slow recurrent points.
    \end{enumerate}
  \item Multisingular hyperbolic  (MSH) attracting sets whose
    singularities are all active and with the same index.
  \end{enumerate}
\end{mainthm}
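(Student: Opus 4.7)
The plan is to verify the defining properties of MNUSE separately for each of the three classes, exploiting that in every case we already have a dominated splitting $E \oplus F$ with $E$ uniformly contracting, so only the (nonuniform) sectional expansion condition along $F$ needs attention.

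First I would handle item (1). Sectional hyperbolicity is precisely the uniform version of the sectional expansion condition defining MNUSE, so any uniform exponential lower bound for the growth of $2$-plane area along $F$ immediately provides the asymptotic positivity required by MNUSE at every point of $\Lambda$. I expect this step to amount to essentially unwinding the two definitions and choosing appropriate constants.

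Next, for item (2), I would start from the ASH condition, which yields asymptotic area expansion along regular orbits in the two-dimensional bundle $F$. The new point is to control the cocycle along orbit pieces that approach the singular set. Since each singularity $\sigma \in \Lambda$ is a hyperbolic saddle, I would linearize via Hartman--Grobman in a small neighbourhood $U_\sigma$ and estimate directly the area growth produced by an excursion through $U_\sigma$ in the linear model; with $\dim F = 2$, the sectional expansion is captured by the determinant of $Df_t$ restricted to the central plane at $\sigma$, so the estimate reduces to a computation with the real parts of the saddle eigenvalues. Combining this local estimate with the ASH asymptotic expansion between excursions would then yield the MNUSE property.

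Finally, for item (3), multisingular hyperbolicity is expressed not via $Df_t|_F$ but via a rescaled cocycle over the linear Poincar\'e flow that vanishes at the singularities. The task is to transfer the hyperbolicity of this rescaled cocycle back to an asymptotic sectional expansion estimate for $Df_t|_F$ on regular orbits of $\Lambda$. The hypothesis that all singularities are active and share a common Morse index is essential here: it ensures that the rescaling cocycle is globally well defined, of consistent sign, and uniformly comparable across the whole attractor, so that the transfer does not degenerate when orbits visit different singular neighbourhoods.

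The main obstacle will be item (3). One must compare the rescaled linear Poincar\'e cocycle with the tangent cocycle $Df_t|_F$ and show that the nonuniform distortion accumulated during long excursions near singularities is still compatible with the MNUSE definition. The activity and common-index hypotheses will be used precisely to close this comparison, controlling the sign and order of magnitude of the rescaling function along every regular orbit limiting on $\sing(X) \cap \Lambda$; the remaining work is bookkeeping of asymptotic averages, which by item (1) is straightforward once the comparison is established.
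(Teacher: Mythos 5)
Your proposal misses the central mechanism of the paper's proof and, as written, would not close. The first problem is structural: the MNUSE condition \eqref{eq:MNUE} must hold on a subset $\Gamma$ of the trapping region $U$ with $\m(\Gamma)>0$, while the attracting set $\Lambda$ itself typically has zero volume. Verifying sectional expansion ``at every point of $\Lambda$'', as you do in item (1), therefore does not produce an admissible $\Gamma$. The paper's device for all three items is to invoke the prior existence of finitely many ergodic physical/SRB measures $\mu_1,\dots,\mu_p$ whose basins cover $\m$-a.e.\ point of $U$ (from \cite{APPV,LeplYa17,araujo_2021} in the SH case and \cite[Corollary D]{ArSal2023} in the ASH case), take $\Gamma=B(\mu_1)\cup\dots\cup B(\mu_p)$, and use the ergodic theorem to convert estimates on $\Lambda$ into asymptotic time averages along orbits in $U$. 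This ingredient is entirely absent from your plan. The gap is most serious in item (2): the ASH hypothesis \eqref{eq:ASH} is only a $\limsup$ lower bound on $\frac1T\log|\det(DX_T\mid_{F_x})|$, whereas \eqref{eq:MNUE} amounts (when $\dim E^{cu}=2$, via additivity of the log-determinant) to a $\liminf$-type lower bound on Birkhoff averages of $\log|\det(DX_\tau\mid_{E^{cu}})|$; these do not coincide pointwise. The paper bridges them precisely by working on basins of ergodic measures, where Birkhoff/Kingman turn the $\limsup$ into an actual limit equal to a space average that inequality \eqref{eq:tmu} makes strictly negative. Your proposed local analysis of excursions near saddles does not address this $\limsup$-versus-$\liminf$ discrepancy, and in any case Hartman--Grobman provides only a topological conjugacy, which cannot be used to estimate the derivative cocycle $\wedge^2 DX_t\mid_{E^{cu}}$.

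For item (3) you plan a direct comparison between the rescaled linear Poincar\'e cocycle and $DX_t\mid_{E^{cu}}$ near the singularities; this is essentially a re-proof of a hard theorem, and your outline does not carry it out. The paper instead reduces (3) to (1) in one line by citing \cite[Theorem C]{Crovetal2020}: a multisingular hyperbolic set whose singularities are all active and of the same index is sectional hyperbolic. You should adopt that reduction, and then repair items (1) and (2) by inserting the physical-measure/ergodic-theorem step described above.
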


\begin{remark}[ASH and MSH attractors]
  \label{rmk:attractors}
  ASH or MSH attractors (transitive attracting sets) automatically
  satisfy the conditions on equilibria of items (2) and (3) above,
  respectively.
\end{remark}

Next results show that the class of multisingular hyperbolic
systems, given in \cite{Crovetal2020}, does not encompass all sectional
hyperbolic systems, and provides examples of MNUSE 
which do not satisfy any of the SH, ASH and MSH conditions.

\begin{mainthm}[Star property versus MSH and Lorenz-like
  singularities]
  \label{mthm:nonmsh}
  On every $3$-dimen\-sional Riemannian manifold $M$, for any
  given $r\ge1$, there exists a $C^r$ open set of star vector fields
  exhibiting a sectional hyperbolic attracting set which is not
  multisingular hyperbolic. 

  In addition, there exists a $3$-dimensional connected Riemannian
  manifold $M$ which, for any $r\ge1$, admits a $C^r$ open set of star
  vector fields supporting a sectional hyperbolic attracting set
  containing no Lorenz-like singularities.
\end{mainthm}

We construct several examples of attracting sets and attractors which
satisfy only one of the  SH, ASH or MSH conditions; or that
admits dense trajectories with coexisting Lorenz-like and Rovella-like
equilibria, with two-dimensional central-unstable direction, or higher
central dimension.

\begin{mainthm}[examples of attracting sets]\label{mthm:nonsec}
  On every $3$-dimensional connected Riemannian manifold $M$ and for
  any given $r\ge1$, there exists a $C^r$ vector field exhibiting a
  MNUSE attracting set, which
  \begin{enumerate} 
  \item is ASH but is neither SH nor MSH; and also
  \item is neither SH, nor ASH, nor MSH.
  \end{enumerate}
  Moreover, we construct examples of ASH attractors with
  \begin{enumerate}[resume]
  \item two-dimensional central bundle and non-sectional
    hyperbolic (``sectionally neutral'') equilibria;
  \item equilibria of mixed type: both Lorenz-like (sectionally
    expanding) and Rovella-like (sectionally contracting) equilibria;
  \item \emph{any given central dimension $d_{cu}>2$} and a
    pair of equilibria of either non-sectional-hyperbolic type, or
    mixed type.
  \end{enumerate}
  In addition, we construct examples of partialy hyperbolic
  attractors
  \begin{enumerate}[resume]
  \item which are MNUSE with \emph{three-dimensional center
      bundle} $d_{cu}=3$ and hyperbolic equilibria which are
    non-sectional hyperbolic; and 
  \item with equilibria of mixed-type: generalized Lorenz-like and
    generalized Rovella-like.
  \end{enumerate}
\end{mainthm}
 
In view of Theorem~\ref{mthm:SH-ASH-NUSH}, the class MNUSE contains
the SH, ASH and MSH classes. Therefore, all properties of attracting
MNUSE systems also hold for these classes of attracting sets.

\begin{remark}
  \label{rmk:noHypLema}
  The attracting sets constructed in item (2) of
  Theorem~\ref{mthm:nonsec} do not satisfy the Hyperbolic
  Lemma~\ref{le:hyplemma}; see next section.
\end{remark}


Theorem~\ref{mthm:SH-ASH-NUSH} depends on the next result providing
sufficient conditions for the existence of physical measures for
partially hyperbolic attracting sets.

\begin{mainthm}[Physical measure for weak
  ASH attracting set]
    \label{mthm:hdASH}
    Let a $C^2$ vector field $G$ on $M$ and a trapping region $U$ be
    given containing a partially hyperbolic attracting set $\Lambda$
    so that every $x\in\Lambda$ not converging to any equilibrium
    satisfies non-uniform sectional expansion (NUSE).  We assume that
    $\Lambda$ contains only saddle-type hyperbolic equilibria.

    If either one of the next condition holds
    \begin{enumerate}[(A)]
    \item the central bundle is two-dimensional (codimension $2$ case);
    \item all singularities are singular-hyperbolic:
      $|\det(Df\mid_{E^{cu}_\sigma})|\ge1$ for all
      $\sigma\in\sing_\Lambda(G)$; or
    \item there exists a positive volume subset $\Omega\subset U$ of
    weak slow recurrent (wSR) points;
    \end{enumerate}
  then there exists a physical/SRB  measure supported on
  $\Lambda$.

  If, in addition,  $\Lambda$ is transitive, then $\Lambda$ supports a
  unique physical/SRB probability measure whose basin covers a
  neighborhood of $\Lambda$.
\end{mainthm}

Since a physical/SRB measure $\mu$ supported on $\Lambda$ cannot have
atoms by definition, then we deduce the following.

\begin{mainclly}[existence of physical/SRB and weak slow recurrence]
  \label{mcor:wSR-SRB}
  In the same setting of Theorem~\ref{mthm:hdASH}, there exists a
  physical/SRB measure $\mu$ supported on $\Lambda$ if, and only if,
  weak slow recurrence holds on a positive volume subset
  $\Omega\subset B(\mu)$.
\end{mainclly}

\subsection{Comments and conjectures}
\label{sec:conjectures}
Although the authors, together with Arbieto~\cite{ArArbSal}, have
shown that a continuous invariant splitting $T_\Lambda M=E \oplus F$
with uniform contraction along $E$ and sectional-expansion along $F$,
must be a dominated splitting, next example shows that domination is
not a consequence of the other MNUSE conditions, even including the
existence of physical/SRB measures.

\begin{example}[Robustly transitive set not admitting a dominated
  splitting]
  \label{ex:nondomcont}
  We consider the suspension flow $\phi_t$, with constant roof of
  height $1$, of the diffeomorphisms $f:\TT^4\to\TT^4$ of the
  $4$-torus studied by Tahzibi in~\cite{tah04}. The diffeomorphism
  admits a $Df$-invariant dominated splitting
  $TM= E^{cs}\oplus E^{cu}$ together with a hyperbolic periodic point
  with expanding direction contained in $E^{cs}$, and another
  hyperbolic periodic point with contracting direction contained in
  $E^{cu}$. Hence, the naturally inherited $D\phi_t$-invariant
  splitting of the suspension
  $T\wt{M}=E^{cs}\oplus (E^{cu}\oplus(\RR\cdot X))$ is
  \emph{continuous but cannot be dominated}, where $\RR\cdot X$ is the
  flow direction on the special manifold $\wt{M}:= M/\sim$ defined by
  the natural dynamical identification.

  Moreover, since the time-$1$ map $f=X_1$ satisfies $P^1=Df$, then
  $E^{cu}=N^{cu}$ and, by the constructions presented in~\cite{tah04},
  we get~\eqref{eq:MNUE} Lebesgue almost everywhere. It also
  satisfies average asymptotic contraction along $E^{cs}$. In
  addition, since $f$ is transitive and admits a unique ergodic
  physical/SRB invariant measure, then the suspension of this measure
  is also physical/SRB with respect to the suspension flow $X_t$; see
  e.g.~\cite[Chpt. 7, Subsection 3.6]{AraPac2010}.
\end{example}

Using the same techniques from Mi, Cao and Yang~\cite{CMY2017}, we
may replace the domination condition by H\"older continuity of the
splitting over $\Lambda$, keeping the conclusion of
Theorem~\ref{mthm:hdASH}.

Recently, Cao, Mi and Zou~\cite{caomizou} propose a
method to construct Pesin unstable manifolds for hyperbolic measures
admiting a continuous splitting of the tangent bundle, so it should be
possible to replace the domination condition by only continuity of the
splitting, and still keep the conclusion of Theorem~\ref{mthm:hdASH}.

Items (A)-(B) of Theorem~\ref{mthm:hdASH}, restricted to $C^\infty$
diffeomorphisms, together with recent results from Burguet and
Ovadia~\cite{BOvadia}, show that the existence of physical/SRB
measures should only depend on positive Lyapunov exponents and not on
slow recurrence.

\begin{conj}[physical/SRB without slow recurrence]
  \label{conj:physnoSR}
 In Theorem~\ref{thm:discretefabv} and Theorem~\ref{mthm:hdASH} the
 slow recurrence conditions are superfluous.
\end{conj}

Extending the theory of ASH sets,
from~\cite{APPV,ArCerq,APRV24,RegoVivas24} and the results of the
present work, we propose.

\begin{conj}
  \label{conj:hdASH}
  ASH attracting sets with any codimension are entropy-expansive,
  kinematic-expansive, rescaling-expansive, and contain a dense set of
  periodic orbits under mild conditions.
\end{conj}

\subsection{Organization of the text}
\label{sec:organization-text}

The text is organized as follows, in Section \ref{sec:prelim-definit}
we present precise definitions of the SH, ASH and MSH classes,
together with auxiliary results. In
Section~\ref{sec:proof-main-theorems} we prove
Theorems~\ref{mthm:SH-ASH-NUSH},~\ref{mthm:nonmsh} and
Theorem~\ref{mthm:hdASH}; and also items (1)-(2) of
Theorem~\ref{mthm:nonsec}.  In Section~\ref{sec:multid-exampl} we
construct examples proving items (3)-(5) of
Theorem~~\ref{mthm:nonsec}; while in
Section~\ref{sec:new-examples-non} we construct examples proving items
(6)-(7).

\subsection*{Acknowledgements}
\label{sec:acknowledgements}

V.A.
thanks the Mathematics and Statistics Institute of the Federal
University of Bahia (Brazil) for its support of basic research. L.S.
thanks the Mathematics Institute of Universidade Federal do Rio de
Janeiro (Brazil) for its encouraging of mathematical
research; and the Mathematics and Statistics Institute of the
Federal University of Bahia (Brazil) for its hospitality.

\section{Definitions and auxiliary results}
\label{sec:prelim-definit}

Let $X\in \fX^r(M)$ for some $r\ge1$ be given.
We say that $\sigma\in M$ is an {\em equilibrium} or
\emph{singularity} if $X(\sigma)=0$ and denote by $\sing(X)$ the
family of all such points. An equilibrium
$\sigma\in\sing(X)$ is \emph{hyperbolic} if all the eigenvalues of
$DX(\sigma)$ have non-zero real part.

An \emph{invariant set} $\Lambda\subset M$ for the flow $X_t$,
generated by the vector field $X$, is a subset satisfying
$X_t(\Lambda)=\Lambda$ for all $t\in\RR$. A point $p\in M$ is
\emph{periodic} if it is a \emph{regular point}, that is
$p\in M\setminus\sing(X)$, and there exists $\tau>0$ so that
$X_\tau(p)=p$; its orbit
$\SO_X(p)=X_{\RR}(p)=X_{[0,\tau]}(p)=\{X_t(p): t\in[0,\tau]\}$ is a
\emph{periodic orbit} --- an invariant simple closed curve for the
flow.  An invariant set is \emph{nontrivial} if it is not a finite
collection of periodic orbits and equilibria.  The family $\crit(X)$
of all equilibria and periodic orbits of a vector field $X$ is
\emph{the critical set} of $X$.

We say that a compact invariant set $\Lambda$ is \emph{isolated} if
there exists an open set $U\supset \Lambda$ so that
$ \Lambda =\bigcap_{t\in\RR}\close{X_t(U)}$; and $\Lambda$ is an
\emph{attracting set} if $U$ can be chosen so that
$\close{X_t(U)}\subset U$ for all $t>0$ --- in which case $U$ is a
\emph{trapping region} (or \emph{isolating neighborhood}) for
$\Lambda=\Lambda_X(U):=\cap_{t>0}\close{X_t(U)}$.

An \emph{attractor} is a \emph{transitive} attracting set, i.e., an
attracting set $\Lambda$ with a point $z\in\Lambda$ so that its
positive trajectory
$\SO^+(z)=X_{[0,+\infty)}(z)=\{X_t(z):t\ge0\}$ is dense:
$\closure{\SO^+(z)}=\Lambda$.

\subsection{Singular hyperbolic structures}
\label{sec:sing-hyp-strct}

We are interested in study (singular) hyperbolic structures, the
relation between nonuniform and uniform hyperbolic ones as well as
ergodic and dynamical consequences for flows.  In the following,
several notions of singular hyperbolic structures and some of their
features are presented.

\subsubsection{Partial hyperbolic attracting sets for vector fields}
\label{sec:part-hyperb-diff}

Let $\Lambda$ be a compact invariant set. We say that $\Lambda$ is
\emph{partially hyperbolic} if the tangent bundle over $\Lambda$
splits as a continuous $DX_t$-invariant Whitney sum
$T_\Lambda M=E^s\oplus E^{cu}$ such that we have domination of the
splitting and uniform contraction along $E^s$: that is, there exists a
constant $\lambda >0$ such that 
\begin{align*}
  \max\{\|DX_t | E^s_x\|,
  \|DX_t | E^s_x\| \cdot \|DX_{-t} | E^{cu}_{X_tx}\|\}
  \le
  e^{-\lambda t},
  \quad \forall x \in \Lambda, t\ge0;
\end{align*}
where $d_s:=\dim E^s_x\ge1$ and
$d_{cu}:=\dim E^{cu}_x\ge2$.

We refer to $E^s$ as the stable bundle and to $E^{cu}$ as the
center-unstable bundle.

\begin{remark}[adapted metric]
  \label{rmk:adaptmetric}
  We assume that the above holds some choice of the Riemannian metric
  on the manifold. Changing the metric does not change the rate
  $\lambda$ but might introduce the multiplication by a constant; see
  e.g. \cite{Goum07}.
\end{remark}

In the vector field setting, a dominated splitting is automatically
partially hyperbolic whenever the flow direction is contained in the
central-unstable bundle $X\in E^{cu}$.  In fact, this inclusion is
equivalent to partial hyperbolicity; see e.g.~\cite[Lemma
3.2]{arsal2012a}.  Since the flow direction is invariant, partial
hyperbolicity is the natural setting to consider when studying
invariant sets (which are not composed only of equilibria) for flows
with a dominated splitting.

A {\em partially hyperbolic attracting set} is a partially hyperbolic
set that is also an attracting set.

\subsubsection{Singular/asymptotical sectional-hyperbolicity and
  uniform hyperbolicity}
\label{sec:singul-hyperb-asympt}

We say that the center-unstable bundle is \emph{volume expanding} if
we can find constants $K,\lambda>0$ so that
$|\det(DX_t| E^{cu}_x)|\geq K e^{\lambda t}$ for all $x\in \Lambda$,
$t\geq 0$.

An invariant compact subset $\Lambda$ is \emph{(uniformly)
  hyperbolic} if it is partially hyperbolic and the central-unstable
bundle admits a continuous splitting $E^{cu}=(\RR\cdot X)\oplus E^u$,
with $\RR\cdot X$ the one-dimensional invariant flow direction and
$E^u$ a uniformly expanding subbundle. That is, we get the following
dominated splitting $T_\Lambda M= E^s\oplus(\RR\cdot X)\oplus E^u$ into
three-subbundles; see e.g.~\cite{fisherHasselblatt12}.

A compact nontrivial partially hyperbolic invariant set $\Lambda$ with
volume expanding center-unstable bundle and whose equilibria are all
hyperbolic is a \emph{singular hyperbolic set}.  A singular hyperbolic
set which is also an attracting set is a \emph{singular
  hyperbolic attracting set}.

More generally, we say that $E^{cu}$ is \emph{($2$-)sectionally
  expanding} if there are positive constants $K , \lambda$ so that
for every $x \in \Lambda$ and each $2$-dimensional linear subspace
$L_x \subset E^{cu}_x$ one has $|\det(DX_t| L_x)|\geq K e^{\lambda t}$ for
all $t\geq 0$.

A \emph{sectional-hyperbolic (attracting) set} (SH) is
a partially hyperbolic (attracting) set whose central subbundle is
sectionally expanding.

An \emph{asymptotically sectional hyperbolic (attracting) set } (ASH)
is a partially hyperbolic (attracting) set $\Lambda$ for which there
exists $\lambda>0$ so that
\begin{align}\label{eq:ASH}
  \limsup_{T\nearrow\infty}\frac1T
  \log|\det(DX_T\mid_{F_x})|\ge \lambda
\end{align}
for every
$x\in \Lambda^*:=\Lambda\setminus \cup\{W^s_\sigma:\sigma\in
U\setminus\sing(X)\}$ and each $2$-dimensional linear subspace $F_x$
of $E^{cu}_x$. That is, we have asymptotic sectional expansion along
all trajectories inside the invariant set which do not converge to any
singularity.

\subsection{Multisingular hyperbolicity}
\label{sec:multi-singul-hyperb}
  
A first definition of multisingular hyperbolicity was given in
\cite{BondaLuz21}.  Here, we are going to deal with multisingular
hyperbolicity (MSH) according to Crovisier et al~\cite{Crovetal2020}.
We need the following preliminary notion.

\subsubsection{Linear Poincar\'e Flow}
\label{sec:linear-poincare-flow}

Given  a regular point $x$ of the vector field $X$, we denote by
$ N_x=\{v\in T_xM: \langle v , X(x)\rangle=0\} $ the orthogonal
complement of $X(x)$ in $T_xM$; and  denote by $O_x:T_xM\to N_x$ the
orthogonal projection of $T_x M$ onto $N_x$.
We define for each $t\in\RR$
\begin{align*}
P_x^t:N_x\to N_{X_t x}
\quad\text{by}\quad
P_x^t=O_{X_t x}\circ DX_t(x);
\end{align*}
see Figure~\ref{fig:LPF}.

\begin{figure}[h]
  \centering
  \includegraphics[width=10cm]{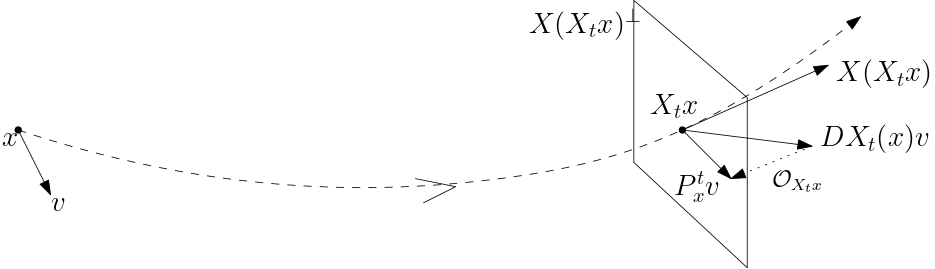}
  \caption{\label{fig:LPF}A representation of the Linear Poincar\'e
    flow $P^t_x$ of a vector $v\in T_xM$ with
    $x\in M\setminus\sing(X)$ and the orthogonal projection
    $O_{X_t x}:T_{X_tx}M\to N_{X_tx}$, with
    $N_{X_tx}=X(X_t x)^\perp$.}
\end{figure}

It is straightforward to check that $P=\{P_x^t:t\in\RR, X(x)\neq 0\}$
satisfies the relation $ P^{s+t}_x=P^t_{X_s x}\circ P^s_x $ for every
$t,s\in\RR$.  The family $P=P_X$ is called the {\em Linear Poincar\'e
  Flow} (LPF) of $X$.

\subsubsection{Singular-domination}
\label{sec:singular-domination}

We say that a LPF invariant decomposition of the normal bundle
$\mathcal{N}_{\Lambda \setminus \sing(X)}=\mathcal{N}_1 \oplus
\mathcal{N}_2$ is a \emph{singular dominated splitting} of index $i$
if:
\begin{enumerate}
\item $\mathcal{N}_1 \oplus \mathcal{N}_2$ is dominated of index $i$.
\item over each singularity $\sigma$ of $\Lambda \cap \sing(X)$ one of
  the following options hold:
\begin{itemize}
\item either there is a $DX_t$-dominated splitting
  $T_{\sigma}M = E^{ss}\oplus F$ for which $E^{ss}$ is uniformly
  contracting, $\dim(E^{ss})=\dim (\mathcal{N}_1)$ and
  $W^{ss}(\sigma)\cap \Lambda = \{\sigma\}$;
\item or there is a $DX_t$-dominated splitting
  $T_{\sigma}M = E\oplus E^{uu}$ for which $E^{uu}$ is uniformly
  expanding, $\dim(E^{uu})=\dim (\mathcal{N}_2)$ and
  $W^{uu}(\sigma)\cap \Lambda = \{\sigma\}$.
\end{itemize}
\end{enumerate}

Another important definition is the next.

\begin{definition}\label{def:Lorenz-sing}
  A singularity $\sigma \in \sing(X)$ is said to be:
  \begin{description}
  \item[Lorenz-like] if we have the invariant splitting
    $ T_\sigma M = E^{ss}\oplus E^{c} \oplus E^{uu} $ so that
    $\dim(E^c)=1$, and the smallest Lyapunov exponent $\lambda^{u}$ of
    $E^{uu}$, the largest Lyapunov exponent $\lambda^{s}$ of $E^{ss}$
    and the center Lyapunov exponent $\lambda^c$ satisfy
   $
   0 \leq \vert\lambda^c\vert < \min\{ -\lambda^{s},\lambda^{u}\}.
   $
 \item[Rovella-like]
   if, in the same setting as above, we have
   $|\lambda^c|>\max\{\lambda^u,-\lambda^s\}>0$.
 \item[Active] if $\sigma$ is hyperbolic and satisfies
   $W^{s}(\sigma)\cap \big(\Lambda \setminus \{\sigma\}\big) \neq \emptyset
   \neq  W^{u}(\sigma)\cap \big(\Lambda \setminus \{\sigma\}\big)$.
 \end{description}
\end{definition}

\begin{remark}[sectional expansion/contraction]
  \label{rmk:rovlike}
  If $\lambda^c<0$, then $\lambda^c+\lambda^u>0$ (sectional expansion
  of $Df$ at $E^{cu}_\sigma:=E^c_\sigma\oplus E^u_\sigma$) for
  Lorenz-like equilibria; and $\lambda^s+\lambda^u<0$ (sectional
  contraction of $Df$ at $E^{cu}_\sigma$) for Rovella-like equilibria.
\end{remark}

Now we are able to introduce the definition of multisingular
hyperbolicity following Crovisier et al~\cite{Crovetal2020}.

\begin{definition}\label{def:MSH}
  We say that an invariant compact subset $\Gamma \subset M$ is
  \emph{multisingular hyperbolic} (MSH) if
\begin{enumerate}
\item $\Gamma$ has a singular dominated splitting
  $\mathcal{N}^s \oplus \mathcal{N}^u$;
\item there exist constants $\lambda,T>0$ and an isolating
  neighborhood $U$ of $\Gamma \cap \sing(X)$ such that, for each
  $x\in\Gamma\setminus U$ with $X_t(x)\in\Gamma\setminus U$ for all
  $t>T$, we have
\begin{align*}
  \max\{ \Vert P^t\vert_{\mathcal{N}^s(x)}\Vert,
  \Vert P^{-t}\vert_{\mathcal{N}^u (X_t(x)) }\Vert\}
  \le e^{-\lambda t};
\end{align*}
\item and any singularity $\sigma$ in $\Gamma$ is Lorenz-like with a
  splitting
  $T_{\sigma}M = E^{ss}_\sigma \oplus E^c_\sigma \oplus E^{uu}_\sigma$
  with $\dim(E^{ss}_\sigma) = \dim (\mathcal{N}^s(x))$ and
  $\dim(E^{uu}_\sigma) = \dim (\mathcal{N}^u(x))$.
 \end{enumerate}
 \end{definition}

\subsubsection{Weak versus uniform hyperbolicity - Hyperbolic Lemma}\label{sec{hyp-lemma}}

We stress that all the above cited notions of singular hyperbolicity
involve an extension of the uniform ones. This is a clear by the next
result.

\begin{lemma}[Hyperbolic Lemma]\label{le:hyplemma}
  Every compact invariant subset $\Gamma$ without equilibria contained
  in either a SH, or a ASH, or a MSH set is uniformly hyperbolic.
\end{lemma}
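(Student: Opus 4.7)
The plan is to treat the SH, ASH, and MSH cases in parallel, on the basis of a common reduction. Since $\Gamma$ is compact, invariant and equilibrium-free, $\|X\|$ is bounded above and below by positive constants on $\Gamma$, so the flow direction $\RR\cdot X$ is a continuous, nowhere-zero one-dimensional subbundle of $T_\Gamma M$, and the normal bundle $N_\Gamma$ and Linear Poincar\'e Flow $P^t$ are well defined on $\Gamma$. Moreover, no point $x\in\Gamma$ can lie in the stable manifold of an equilibrium $\sigma$ of the ambient set: otherwise $X_t x\to\sigma$, and invariance plus compactness of $\Gamma$ would force $\sigma\in\Gamma$, a contradiction. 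This places $\Gamma$ inside the domain of validity of the asymptotic condition~\eqref{eq:ASH} and allows the isolating neighborhood $U$ in Definition~\ref{def:MSH} to be chosen disjoint from $\Gamma$.

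For the SH and ASH cases, Lemma~\ref{le:flow-center}(1) gives $X(x)\in E^{cu}_x$ for every $x\in\Gamma$, so $E^{cu}_x=\RR\cdot X(x)\oplus G_x$ with $G_x:=E^{cu}_x\cap N_x$, and $N_x=\tilde E^s_x\oplus G_x$ with $\tilde E^s_x:=O_x(E^s_x)$ (injective projection, since $E^s\cap\RR\cdot X=\{0\}$); a direct check shows this splitting is $P^t$-invariant. Uniform contraction of $E^s$ under $DX_t$ yields uniform contraction of $\tilde E^s$ under $P^t$, using that the angle between $E^s$ and $\RR\cdot X$ is bounded away from zero on the compact set $\Gamma$. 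For a unit $v\in G_x$, applying sectional expansion to $L=\mathrm{span}(X(x),v)$ and comparing areas before and after $DX_t$ gives
\[
\|P^t_x v\|=|\det(DX_t|_L)|\cdot\frac{\|X(x)\|}{\|X(X_tx)\|}\ge c\,|\det(DX_t|_L)|,
\qquad c:=\frac{\inf_\Gamma\|X\|}{\sup_\Gamma\|X\|}>0.
\]
In the SH case this immediately yields uniform exponential expansion of $P^t|_G$. In the ASH case the same computation produces only an asymptotic rate, which a compactness/chaining argument upgrades to a uniform rate: for each $x$ choose $T_x$ realizing expansion rate at least $\lambda/2$, use uniform continuity of $DX_t$ and of the splitting to extend this estimate to a neighborhood of $x$, extract a finite subcover, and chain orbit segments of bounded length to obtain a uniform exponential rate on all of $\Gamma$. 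This gives uniform hyperbolicity of $P^t$ on $N_\Gamma$, and a standard argument (Doering) lifts it to uniform hyperbolicity of $DX_t$ on $T_\Gamma M=E^s\oplus\RR\cdot X\oplus E^{uu}$.

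For the MSH case, the singular dominated splitting $\mathcal{N}^s\oplus\mathcal{N}^u$ is already a genuine $P^t$-invariant dominated splitting of $N_\Gamma$ because $\Gamma\cap\sing(X)=\emptyset$; taking $U$ disjoint from $\Gamma$, condition~(2) of Definition~\ref{def:MSH} directly provides uniform exponential contraction of $\mathcal{N}^s$ and uniform exponential expansion of $\mathcal{N}^u$ under $P^t$ on all of $\Gamma$, and the same Doering-type transfer yields uniform hyperbolicity of $DX_t$ on $T_\Gamma M$.

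The main obstacle is the ASH case: the asymptotic $\limsup$ in~\eqref{eq:ASH} is inherently pointwise, and promoting it to a uniform exponential rate on $\Gamma$ requires the careful compactness and chaining step sketched above, relying on uniform continuity of the splitting and of the flow derivative on the compact set $\Gamma$. The SH and MSH cases are nearly tautological once the common reduction is in place.
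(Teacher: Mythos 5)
The paper itself does not prove this lemma: it simply cites \cite[Proposition 1.8]{MPP04} for SH, \cite[Theorem 2.2]{SmartinVivas20} for ASH and \cite[Theorem C]{Crovetal2020} for MSH. Your SH and ASH arguments essentially reconstruct what is in the first two references, and they are sound: the area identity $\|P^t_xv\|=|\det(DX_t|_L)|\,\|X(x)\|/\|X(X_tx)\|$ is correct, the reduction to the Linear Poincar\'e Flow plus Doering's criterion is the standard route, and the pointwise-to-uniform upgrade in the ASH case does close, because $t\mapsto\log|\det(DX_t|_{L})|$ (equivalently $t\mapsto\log\|P^t_xv\|$ on the unit sphere bundle of $G$) is an \emph{additive} cocycle over a compact invariant set, so the finite-subcover-and-chaining argument — or, equivalently, the argument via weak$^*$ limits of empirical measures showing every invariant measure has average at least $\lambda$ — is valid. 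That part is fine, if compressed at its crux.

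The MSH case, however, has a genuine gap. You assert that the isolating neighborhood $U$ of the singularities of the ambient set in Definition~\ref{def:MSH} ``can be chosen disjoint from $\Gamma$''. The definition only guarantees the existence of \emph{some} $U$ together with constants $\lambda,T$ for which item (2) holds; shrinking $U$ makes item (2) a \emph{stronger} requirement (the estimate must then hold for more orbit segments), so you are not free to replace $U$ by a smaller neighborhood, and the given $U$ may perfectly well meet $\Gamma$ — for instance, $\Gamma$ could be a periodic orbit of the ambient MSH set passing close to one of its Lorenz-like equilibria. For such orbits item (2) gives no control whatsoever on the Linear Poincar\'e Flow during the passages through $U$, and supplying that control is precisely the hard part of \cite[Theorem C]{Crovetal2020}: one has to analyse the (rescaled/extended) Linear Poincar\'e Flow across $U$ using the Lorenz-like structure of the singularities and the dimension matching $\dim E^{ss}=\dim\mathcal{N}^s$, $\dim E^{uu}=\dim\mathcal{N}^u$ from item (3). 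Your proof skips this entirely, so the MSH case is not established.
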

\begin{proof}
  See e.g.~\cite[Proposition 1.8]{MPP04} for SH sets;
  and~\cite[Theorem 2.2]{SmartinVivas20} for the ASH. For MSH
  non-singular set, cf. \cite[Theorem C]{Crovetal2020}.
\end{proof}

\subsection{Nonuniform sectional hyperbolicity}
\label{sec:asympt-singul-hyperb}

We recall the definition of (strong) nonuniform sectional
hyperbolicity introduced in~\cite{ArbSal2011}, by one of the authors
jointly with A. Arbieto, where we make a slight mosdification in the
splitting dropping the domination condition. Just after this, we
introduce a new definition of weaker hyperbolicity of singular vector
fields.

\begin{definition}[non-uniform sectional hyperbolicity]
  \label{def:wNUSH}
  Let the flow $X_t$ of a $C^1$ vector field $X$ be given and
  $f=X_\tau$ be the discrete time map for some given fixed $\tau>0$.
 A compact invariant subset $\Lambda \subset M$ is a
    \emph{nonuniformly sectional hyperbolic (NUSH)} set if
    \begin{enumerate}[(a)]
    \item there exists a continuous $DX_t$-invariant dominated
      splitting $T_{\Lambda}M= E \oplus F$;
    \item there exists $\eta < 0$ such that for any $p \in \Lambda$
      \begin{align}\label{eq:intNUC}
        \liminf_{t \to \infty} \frac{1}{t} \int_{0}^{t}
        \log\|Df|_{E_{X_s(p)}}\|\, ds
        &\leq \eta, \qand
        \\
        \liminf_{t \to \infty} \frac{1}{t} \int_{0}^{t}
        \log\| [\wedge^2(Df|_{F_{X_s(p)}})]^{-1}\|\, ds
        &\leq \eta.     \label{eq:intNUE}
      \end{align}
    \end{enumerate}
\end{definition}

\begin{remark}[Non-uniform sectional expansion]
  \label{rmk:NUSE}
  A compact partially hyperbolic invariant set $\Lambda$
  satisfying~\eqref{eq:intNUE} is \emph{non-uniformly sectional
    expanding} (NUSE).
\end{remark}

\begin{remark}[Consequence for singularities and periodic orbits]
  \label{rmk:conseqNUSH}
  On a NUSH set, every periodic orbit $p$ is sectional hyperbolic,
  i.e. if the period of $p$ is $t(p)$, then the orbit
  $\SO(p):\{X_t(p):0\le t\le t(p)\}$ admits a continuous
  $DX_t$-invariant dominated splitting $T_{\SO(p)}M= E \oplus F$ such
  that for some constant $\eta<0$
    \begin{align*}
      \frac{1}{t(p)} \int_{0}^{t(p)} \log\Vert Df\vert_{E_{X_s(p)}}\Vert ds
      &\leq \eta, \qand
      \\
      \frac{1}{t(p)} \int_{0}^{t(p)}
      \log \Vert[\wedge^2(Df\vert_{F_{X_s(p)}})]^{-1}\Vert ds &\leq \eta.
    \end{align*}
    Analogously, every singularity $\sigma$ is sectional hyperbolic:
    there exists a $Df$-invariant dominated splitting
    $T_\sigma M= E\oplus F$ such that
    \begin{align*}
      \log\|Df|_{E_{\sigma}}\|\leq \eta
      \qand
      \log\|[\wedge^2(Df|_{F_{\sigma}})]^{-1}\|
      \leq\eta.
    \end{align*}
\end{remark}

In \cite{ArbSal2011}, under the assumption of NUSH condition,
L. Salgado proved the following generic (Baire) type result, jointly
with A. Arbieto.

\begin{theorem}
  \label{thrm:nush-generic}
  Let $\SU\in\Mundo$ be an open subset. Suppose that for any $X$ in
  some residual subset $\SS$ of $\SU$, the critical set $\crit(X)$ is
  NUSH. Then, there exists a residual subset $\SA$ of $\SU$ such that
  every $X\in\SA$ is such that the non-wandering set $\Omega(X)$ is SH.
\end{theorem}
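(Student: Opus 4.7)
The plan is to combine $C^1$-generic closing-lemma technology with a Mañé/Cao-type criterion that promotes integrated non-uniform estimates to uniform hyperbolicity under a dominated splitting. The proof proceeds in three residual refinements of $\SS$, whose intersection will be the desired $\SA$.

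First, I would produce a residual subset $\SA_1\subset\SS$ on which Kupka--Smale holds (all critical elements are hyperbolic) and Pugh's general density theorem applies, so $\Omega(X)=\overline{\crit(X)}$. By upper semi-continuity of dominated splittings and a standard extension argument (using Franks' lemma to preclude periodic orbits with arbitrarily weak domination), the continuous $DX_t$-invariant dominated splitting $E\oplus F$ provided by the NUSH condition on every element of $\crit(X)$ extends, with the same index, to a continuous dominated splitting $T_{\Omega(X)}M=E\oplus F$ on the whole non-wandering set. Lemma~\ref{le:flow-center} ensures that the flow direction sits inside $F$ over regular points, which is needed for the SH conclusion later.

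Second, I would invoke Mañé's Ergodic Closing Lemma on a further residual $\SA_2\subset\SA_1$: every ergodic $X_t$-invariant Borel probability measure $\mu$ supported on $\Omega(X)$ is a weak-$*$ limit of normalised averages along periodic orbits $\SO(p_n)$. The NUSH inequalities on each $\SO(p_n)$, combined with Birkhoff's theorem and the continuity of $\log\|Df|_E\|$ and $\log\|[\wedge^2Df|_F]^{-1}\|$ afforded by the continuous splitting, pass to the limit and yield, for every such $\mu$, the bounds
\begin{align*}
\int \log\|Df|_E\|\, d\mu\le\eta
\qand
\int \log\|[\wedge^2Df|_F]^{-1}\|\, d\mu\le\eta.
\end{align*}
The atomic contribution from singularities is controlled directly by the singularity clause of Definition~\ref{def:wNUSH}.

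Finally, I would apply a sub-additive ergodic argument of Cao-type: on a compact invariant set carrying a $DX_t$-invariant dominated splitting, if the above integrals are uniformly bounded above by $\eta<0$ over \emph{all} invariant probabilities, then $E$ is uniformly contracting and $F$ is uniformly sectionally expanding. Setting $\SA:=\SA_2$, this yields sectional hyperbolicity of $\Omega(X)$ for every $X\in\SA$, as required. The main obstacle is the second step: transferring the NUSH estimates from periodic orbits to every invariant measure while controlling the function $\log\|[\wedge^2Df|_F]^{-1}\|$, which is only lower semi-continuous and can degenerate approaching the hyperbolic equilibria. Handling this limit passage is exactly where the NUSH hypothesis at singularities, together with a careful use of Lyapunov charts around each $\sigma\in\sing(X)\cap\Omega(X)$, becomes indispensable.
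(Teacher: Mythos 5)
The paper does not actually prove Theorem~\ref{thrm:nush-generic}: it is quoted from \cite{ArbSal2011} and used as a black box, so there is no in-paper argument to compare yours against. That said, your three-step scheme --- (i) Kupka--Smale plus Pugh's general density theorem to get $\Omega(X)=\overline{\crit(X)}$, together with a Franks-lemma/Ma\~n\'e argument giving uniform domination constants so that the splitting extends continuously to the closure; (ii) the ergodic closing lemma to transfer the NUSH averages from periodic orbits to every ergodic invariant measure supported in $\Omega(X)$; (iii) a Cao-type subadditive criterion converting negative integrals over \emph{all} invariant measures into uniform contraction of $E$ and uniform sectional expansion of $F$ --- is precisely the strategy of the cited reference, and each ingredient is available for $C^1$ vector fields. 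Two points you gloss over deserve explicit treatment. First, the NUSH hypothesis only provides, for each individual critical element, \emph{some} dominated splitting and \emph{some} $\eta<0$; both the uniformity of $\eta$ over $\crit(X)$ and the constancy of the index $\dim E$ must themselves be extracted by perturbation (Franks' lemma for flows), and this is exactly where the assumption that NUSH holds on a residual subset of the \emph{open} set $\SU$ --- rather than for a single vector field --- is indispensable. Second, in step (ii) the passage to the limit of the integrals requires the splittings over the approximating periodic orbits to converge to the splitting over the support of $\mu$, which again rests on the uniform domination obtained in step (i); note also that, for the tangent-flow splitting $E\oplus F$ (as opposed to the Linear Poincar\'e Flow), the functions $\log\|Df|_E\|$ and $\log\|[\wedge^2Df|_F]^{-1}\|$ are genuinely continuous on a compact set carrying a continuous splitting, so the lower semi-continuity worry you raise at the end is not the real difficulty --- the dimension bookkeeping at the singularities (flow direction collapsing inside $F$) is. With those caveats filled in, the outline is sound and consistent with the proof in \cite{ArbSal2011}.
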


As motivation for our new definition of nonuniformly sectional
hyperbolicity: the present authors, jointly with S. Sousa
in~\cite[Theorem 1.8]{ArSal25}, show that a under total probability
assumption, NUSH subset $\Lambda$ is a (uniformly) sectional
hyperbolic set.

Next, we state this result considering the NUSH property for any point
in $\Lambda$, and we give the ideia of proof here for completeness.
\begin{theorem}\label{thrm:nuhs-sh}
  Given a $C^1$ vector field, a compact invariant set
  $\Lambda \subset M$ such that every point satisfies the NUSH
  property, is a SH set for $X$.
\end{theorem}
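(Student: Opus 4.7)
The plan is to upgrade the pointwise asymptotic estimates guaranteed by sNUSH into uniform exponential estimates on $\Lambda$, thereby matching the SH definition verbatim. The main tool I intend to deploy is a Cao-type uniform sub-additive hyperbolicity criterion, applied over the continuous $DX_t$-invariant dominated splitting $T_\Lambda M = E\oplus F$ already provided by the hypothesis.

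First I would fix $f := X_\tau$ as in Definition~\ref{def:wNUSH} and introduce the two continuous sub-additive cocycles
\begin{align*}
\phi_n(x) := \log\|Df^n|_{E_x}\|
\qand
\psi_n(x) := \log\|[\wedge^2 Df^n|_{F_x}]^{-1}\|,
\end{align*}
which are continuous by continuity of the splitting and sub-additive by the operator-norm cocycle property. The sNUSH hypothesis, interpreted as the pointwise validity on all of $\Lambda$ of the $\liminf$ bounds \eqref{eq:intNUC}--\eqref{eq:intNUE}, gives $\liminf_n \tfrac1n\phi_n(x) \le \eta$ and $\liminf_n \tfrac1n\psi_n(x) \le \eta$ at every $x\in\Lambda$. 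Fixing an $f$-invariant ergodic Borel probability $\mu$ on $\Lambda$, Kingman's sub-additive ergodic theorem converts these pointwise bounds into
\begin{align*}
\lim_{n\to\infty}\frac1n\int_\Lambda \phi_n\, d\mu \le \eta < 0
\qand
\lim_{n\to\infty}\frac1n\int_\Lambda \psi_n\, d\mu \le \eta < 0.
\end{align*}

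Next I would invoke Cao's uniform sub-additive hyperbolicity criterion: a continuous sub-additive cocycle on a compact invariant set whose top asymptotic rate is strictly negative against every invariant ergodic measure is uniformly exponentially small. Applied to $\phi_n$ and $\psi_n$ this yields constants $C,\lambda>0$ with $\|Df^n|_{E_x}\| \le Ce^{-\lambda n}$ and $\|[\wedge^2 Df^n|_{F_x}]^{-1}\|\le Ce^{-\lambda n}$ for every $x\in\Lambda$, $n\ge 0$. Since $\|[\wedge^2 Df^n|_{F_x}]^{-1}\|^{-1}$ equals the infimum over $2$-planes $L\subset F_x$ of $|\det(Df^n|_L)|$, the second bound is exactly uniform sectional expansion along $F$, while the first is uniform contraction of $E$. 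Interpolation between integer multiples of $\tau$ using the compactness bound on $\|DX_t\|$ for $t\in[0,\tau]$ transfers these discrete estimates to continuous time, matching the SH definition of Section~\ref{sec:singul-hyperb-asympt}.

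The hard part will be the passage from the pointwise \emph{liminf} bounds to the integrated limit bounds against invariant measures, particularly at equilibria: a regular orbit that accumulates on a singularity $\sigma\in\sing(X)\cap\Lambda$ can spend arbitrarily long intervals near $\sigma$, so the sNUSH rate along such orbits can be delicate to control. I would absorb this difficulty as follows: at each singularity the sNUSH condition of Definition~\ref{def:wNUSH} is stated in a purely linear one-step form, so Dirac masses at equilibria satisfy the integrated bound trivially; for non-trivial ergodic measures, Birkhoff's theorem combined with the pointwise $\liminf$ estimate at any generic point forces the asymptotic integrated rate to be $\le\eta$. Continuity of the splitting on the compact set $\Lambda$, together with Lemma~\ref{le:flow-center} ensuring the flow direction lies in $F$ on the non-trivial part of $\Lambda$, then lets the Cao-type argument proceed uniformly across regular and singular portions simultaneously.
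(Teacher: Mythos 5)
Your route is genuinely different from the paper's: the paper's proof is essentially two citations --- it invokes the proof of \cite[Theorem 1.10]{ArSal2023} to get uniform contraction of $E$ and sectional expansion of $F$, and then \cite[Theorem A]{ArArbSal} to conclude that $E\oplus F$ is dominated over all of $\Lambda$. You instead reconstruct the mechanism behind the first citation: Birkhoff/Kingman to pass from the everywhere-pointwise $\liminf$ bounds to integrated rates $\le\eta<0$ against every ergodic invariant measure, followed by Cao's uniform sub-additive criterion to upgrade to uniform exponential bounds, and interpolation to return to continuous time. That part is sound (including the identification of $\|[\wedge^2 Df^n|_{F_x}]^{-1}\|^{-1}$ with the minimal $2$-plane Jacobian, and the treatment of Dirac masses at equilibria), and it is more self-contained than the paper's argument; the only technical point worth spelling out is the passage between $f$-invariant and flow-invariant ergodic measures, since the hypothesis is stated as a continuous-time average while Cao's criterion for $Df^n$ quantifies over $f$-ergodic measures.

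The genuine gap is domination. You take the global dominated splitting $T_\Lambda M=E\oplus F$ as part of the hypothesis, but the paper's stated intent (see the sentence preceding Definition~\ref{def:wNUSH}, ``dropping the domination condition'') and its own proof make clear that domination is only assumed at the equilibria; global domination is a \emph{conclusion}, obtained from \cite[Theorem A]{ArArbSal}. This matters because uniform contraction of $E$ together with uniform sectional expansion of $F$ does \emph{not} by itself imply that $E\oplus F$ is dominated: $F$ may contain a direction contracted faster than $E$ while every $2$-plane in $F$ still expands area, and without domination the splitting is not partially hyperbolic, so the SH definition is not met. Your proof therefore needs a final step --- either invoking \cite[Theorem A]{ArArbSal} as the paper does, or an independent argument producing domination from uniform contraction, sectional expansion, and domination at the singularities. (If one reads item (2)(a) of Definition~\ref{def:wNUSH} literally, domination is assumed and your proof closes, but then the paper's last paragraph would be redundant; you should at least flag which reading you are using.)
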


\begin{proof}
  The splitting $T_\sigma M=E_\sigma\oplus F_\sigma$ at each
  equilibrium $\sigma\in\sing_\Lambda(X)$ is dominated by the NUSH
  assumption.  From the proof of the first statement of~\cite[Theorem
  1.8]{ArSal25}, we have that $E=E^s$ in uniformly contracted, and
  $F=E^{cu}$ is sectionally expanding.

  From the previous properties, the result~\cite[Theorem A]{ArArbSal}
  ensures that $T_\Lambda=E\oplus F$ is also a dominated splitting,
  completing all the properties of a sectional hyperbolic set.
\end{proof}


\begin{corollary}\label{le:nush-hyp-le}
  Every NUSH vector field satisfies the Hyperbolic
  Lemma~\ref{le:hyplemma}.
\end{corollary}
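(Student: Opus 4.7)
The plan is to reduce the statement to the sectional hyperbolic case of Lemma \ref{le:hyplemma} by applying Theorem \ref{thrm:nuhs-sh}. Given a compact invariant NUSH set $\Lambda$ of a $C^1$ vector field $X$ and a compact invariant subset $\Gamma\subset\Lambda$ with $\Gamma\cap\sing(X)=\emptyset$, I first observe that $\Gamma$ inherits from $\Lambda$ a continuous $DX_t$-invariant dominated splitting $T_\Gamma M=E\oplus F$, together with the liminf inequalities \eqref{eq:intNUC} and \eqref{eq:intNUE} at every $p\in\Gamma$. Hence $\Gamma$ is itself a NUSH invariant set.

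Next I would argue that on the equilibrium-free set $\Gamma$ the NUSH and sNUSH hypotheses coincide: the flow direction $\RR\cdot X$ is bounded away from zero on the compact set $\Gamma$, so the integrand in \eqref{eq:intNUC}--\eqref{eq:intNUE} is uniformly bounded, and the usual Pliss/Birkhoff argument used in the proof of Theorem \ref{thrm:nuhs-sh} applies without any issue coming from the accumulation by equilibria. This lets me invoke Theorem \ref{thrm:nuhs-sh} on $\Gamma$ and conclude that $\Gamma$ is a sectional hyperbolic set of $X$.

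Finally, since $\Gamma$ is a SH set with no equilibria, I apply the SH case of Hyperbolic Lemma \ref{le:hyplemma} (i.e.~\cite[Proposition 1.8]{MPP04}) to conclude that $\Gamma$ is uniformly hyperbolic, which is exactly what is required for the NUSH vector field $X$ to satisfy Hyperbolic Lemma.

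The main obstacle I anticipate is the passage from the pointwise liminf conditions \eqref{eq:intNUC}--\eqref{eq:intNUE} of NUSH to uniform contraction of $E$ and uniform sectional expansion of $F$ on $\Gamma$, which is the substantive content of Theorem \ref{thrm:nuhs-sh}. The absence of equilibria in $\Gamma$ is precisely what makes the averages behave uniformly (no vanishing of $X$ to spoil the bounded-integrand hypothesis), so the remaining work is bookkeeping rather than a new idea; the proof is essentially a direct combination of Theorem \ref{thrm:nuhs-sh} and Lemma \ref{le:hyplemma}.
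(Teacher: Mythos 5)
Your argument is correct and is exactly the route the paper intends: the corollary is stated without proof precisely because it is the immediate combination of Theorem~\ref{thrm:nuhs-sh} (NUSH implies SH) with the SH case of Lemma~\ref{le:hyplemma}. The only (harmless) difference is that you restrict to the equilibrium-free subset $\Gamma$ before invoking Theorem~\ref{thrm:nuhs-sh}, whereas it is marginally simpler to apply that theorem to the whole NUSH set $\Lambda$ and then apply the Hyperbolic Lemma to $\Gamma\subset\Lambda$.
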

In view of the strength of the last NUSH definition, we
consider the following new non-uniform hyperbolic conditions.

\begin{definition}[Non-uniform hyperbolic condition]
  \label{def:NUSHvar}
  Let $U\subset M$ be a forward invariant set of $M$ for the flow of a
  $C^1$ vector field $X$ so that the maximal invariant subset
  $\Lambda=\Lambda_X(U)$ is partially hyperbolic.
  We write $f=X_\tau$ for the discrete time map of the flow of the
  vector field $X$ for some given fixed $\tau>0$. Then we say that
  $\Lambda$ is \emph{mostly nonuniformly sectional expanding} (MNUSE)
  if there exists $\Gamma\subset U$ with $\m(\Gamma)>0$ and $\eta < 0$
  such that, for some continuous extension of the central-unstable
  subbundle to $U$ (denoted by the same symbol), we have
  \begin{align}\label{eq:MNUE}
    \limsup_{T\nearrow\infty}\frac1T
    \int_0^T \log\|[\wedge^2 (Df\mid_{E^{cu}_{X_s x}})]^{-1}\| \,
    ds
    \le \eta,
    \quad x\in\Gamma.
  \end{align}
\end{definition}

These definitions imply that all transverse directions to the vector
field along the center-unstable subbundle have positive Lyapunov
exponent; that is, if $v\in E^{cu}_x\setminus(\RR\cdot X)$, then
$\chi(x,v):=\limsup_{t\to+\infty}\log\|DX_t(x)v\|^{1/t}>0$; see
e.g.~\cite[Theorem 1.6]{ArSal25}.

\begin{remark}[independence of extension]\label{rmk:indext}
  The above notions do not depend on the particular continuous
  extension of $E^{cu}_\Lambda$ to $U$ chosen, due to the domination
  of the splitting; see e.g. \cite[Proposition 2.3]{ArSal25}.
\end{remark}

\subsubsection{Versions of non-uniform expansion and recurrence}
\label{sec:versions-non-uniform}

The following notions are used in the proof of
Theorem~\ref{mthm:hdASH}.  We say that the a partially hyperbolic
attracting set $\Lambda$ is
\begin{description}
\item[weak non-uniform $2$-sectionally expanding (wNU2SE)] if
  there exists $\eta<0$ so that
  \begin{align}\label{eq:wNU2SE}
    \Omega=\left\{
    x\in U: \liminf_{n\nearrow\infty}\frac1n\sum\nolimits_{i=0}^{n-1}
    \log
    \big\|[\wedge^2 (Df\mid_{E^{cu}_{f^i x}})]^{-1}\big\|
    \le \eta
    \right\}
    \qand \m(\Omega)>0.
  \end{align}
\end{description}
This is enough to ensure existence a physical/SRB measure under a slow
recurrence condition, as stated in Theorem~\ref{mthm:hdASH}.

\begin{remark}[strong version]
  Replacing $\liminf$ by $\limsup$ in~\eqref{eq:wNU2SE} we obtain the
  \emph{non-uniform $2$-sectional expansion} (NU2SE) condition which
  provides a different sufficient condition for good statistical
  properties; see Theorem~\ref{thm:discretefabv} in the next
  Subsection~\ref{sec:physicalsrb-measures}.
\end{remark}

We say that the vector field $G$ has
\begin{description}
\item[slow recurrence (SR)] if, on a positive Lebesgue measure subset
  $\Omega\subset U$, for every $\epsilon>0$, we can find $r>0$ so
  that
\begin{align}
  \label{eq:SR}
  \limsup_{n\nearrow\infty}\frac1n\sum\nolimits_{i=0}^{n-1}-\log d_r
  \big(f^i(x),\sing_\Lambda(G)\big) <\epsilon, \quad x\in\Omega;
\end{align}
\end{description}
where $d_r(x,S)$ denotes de $r$-{\em truncated distance} from $x\in M$
to a subset $S$, that is
\begin{align*}
  d_{r}(x,S)=\left\{
  \begin{array}{lll}
d(x,S) & \textrm{if $0<d(x,S)< r$;}
\\
1 & \textrm{if $d(x,S)\geq r$.}
\end{array} \right.
\end{align*}
However, for Theorem~\ref{mthm:hdASH} a weaker version is enough. We
say that $G$ has
  \begin{description}
  \item[weak slow recurrence (wSR)] if, on the positive Lebesgue
    measure subset $\Omega\subset U$, for every $\epsilon>0$, we can
    find $r>0$ so that
\begin{align}
  \label{eq:wSR}
  \limsup_{n\nearrow\infty}\frac1n\sum\nolimits_{i=0}^{n-1}
  \delta_{f^ix}(B_r(\sing_\Lambda(G)))<\epsilon, \quad x\in\Omega.
\end{align}
  \end{description}

    \begin{remark}[no atoms at equilibria]
    \label{rmk:noatom}
    If $x\in U$ satisfies~\eqref{eq:wSR}, then any $f$-invariant
    probability measure $\mu$ obtained as a weak$^*$ accumulation of the
    empirical measures $\Big(\frac1n\sum_{i=0}^{n-1}
    \delta_{f^ix}\Big)_{n\ge1}$ does not admit the elements of
    $\sing_\Lambda(G)$ as atoms: $\mu(\sing_\Lambda(G))=0$.
  \end{remark}

  \begin{remark}[NUSE implies wNU2SE]
    \label{rmk:NUSE-wNU2SE}
    From~\cite[Theorem 1.8]{ArSal25}, a trajectory not converging to
    any singularity and satisfying NUSE, also satisfies wNU2SE.
  \end{remark}


\subsection{Hyperbolic physical/SRB measures}
\label{sec:physicalsrb-measures}

Hyperbolicity of an invariant probability measure means
\emph{non-uniform hyperbolicity}. More precisely, the forward Lyapunov
exponent function
$
\chi^+(x,v):=\limsup\nolimits_{t\nearrow\infty}\log\|D\phi_t(x)v\|^{1/t}
$, for $v\in T_x M\setminus\{\vec0\}$ admits only finitely many values
$\chi^+_1(x)>...>\chi^+_{p(x)}(x)$ on $TM\setminus \{0\}$ (the
\emph{forward Lyapunov Spectrum} at $x$) and generates a filtration
$ 0\subsetneq V_{p(x)}(x) \subsetneq \cdots \subsetneq V_{1}=T_xM$
after setting $V_i(x):=\{ v\in T_xM, \ \chi^+(x,v)\leq \chi^+_i(x)\}$.
Here $p(x), \chi^+_i(x)$ are $\phi_t$-invariant functions; the vector
subspaces $V_i(x)$, $i=1,...,p(x)$ are $D\phi_t$-invariant; and each
of them depend Borel measurably on $x$.

An invariant probability measure $\mu$ supported in $\Lambda$ is
\emph{hyperbolic}, if the tangent bundle over $\Lambda$ splits as
$T_z M = E^s_z\oplus (\RR\cdot X(z)) \oplus F_z$ of invariant
subspaces defined for $\mu$-a.e.  $z$, where
$\RR\cdot X(z)$ is the flow direction (with zero Lyapunov exponent);
$\RR\cdot X(z)\oplus F_z=E^{cu}_z$ and $F_z$ is the direction with
positive Lyapunov exponents, i.e.
$\lim_{t\to+\infty}\frac1t\log\big\|(D\phi_t\mid_{F_z})^{-1}\big\|<0$.
All directions along $E^s_z$ have negative Lyapunov
exponents
$\lim_{t\to+\infty}\frac1t\log\big\|D\phi_t\mid_{E^s_z})\big\|<0$. This
ensures that the Lyapunov Spectrum satisfies
$\chi^+_{d_{cu}+1}(z)<0 = \chi^+_{d_{cu}}(z)<\chi^+_{d_{cu}-1}(z)$.

A \emph{physical measure} is an invariant probability measure $\mu$
for which asymptotic time averages exist and coincide with the space
average, for a set of initial conditions with positive Lebesgue
measure, i.e. in the weak$^*$ topology of convergence of probability
measures 
\begin{align*}
B(\mu):=\left\{z\in M: \lim_{T\nearrow\infty} \frac1T \int_0^T
  \delta_{\phi_t (z)} \,dt = \mu \right\};
\end{align*}
and the (ergodic) \emph{basin} $B(\mu)$ of the measure $\mu$ satisfies
$\Leb(B(\mu))>0$.

\begin{remark}[Existence of physical measures and MNUSE definition]
  \label{rmk:goodef}
  For $C^2$ vector fields, it is known that sectional hyperbolic
  attracting sets admit physical
  measures~\cite{APPV,LeplYa17,araujo_2021}, and so there exists a
  positive Lebesgue measure subset of their basin of attraction which
  satisfies the MNUSE property.
\emph{This shows that MNUSE is an adequate extension of uniform
  hyperbolicity for singular-flows.}
\end{remark}

A $cu$-\emph{Gibbs state} is an invariant probability measure which
satisfies
\begin{align}
  \label{eq:fPesin}
  h_\mu(X_1)
  =
  \int \Sigma^+\,d\mu
  =
  \int\log|\det DX_1\mid_{E^{cu}}|\,d\mu>0;
\end{align}
where $\Sigma^+(z)=\sum_{i\le\dim(E^u)} \chi_i^+(z)$ is the
sum of positive Lyapunov exponents (with multiplicities).

We will focus on \emph{hyperbolic physical/SRB} measures, that is,
invariant probability measures which are simultaneously hyperbolic
physical measures and $cu$-Gibbs states; see e.g~\cite{LY85}.

We have the following existence result for this class of invariant
probability measures.

\begin{theorem}[Physical/SRB measures for non-uniformly
  sectionally expanding flows]{\cite[Theorem B]{ArSal25}}
  \label{thm:discretefabv}
  Let $G\in\fX^2(M)$ be a vector field with a partially hyperbolic
  attracting set $\Lambda=\Lambda_G(U)$ satisfying (SR) on
  $\Omega\subset U$, with $\m(\Omega)>0$.  Then we have NU2SE on
  $\Omega$ if, and only if, there are finitely many ergodic
  physical/SRB measures whose basins cover $\m$-a.e. point of
  $\Omega$:
  $\m\Big(\Omega\setminus \big(B(\mu_1)\cup\dots\cup
  B(\mu_p)\big)\Big)=0$.
\end{theorem}





\section{Proof of the main statements}
\label{sec:proof-main-theorems}

Here we present first a proof of Theorem~\ref{mthm:hdASH} and, using
its statemente, a proof of Theorem~\ref{mthm:SH-ASH-NUSH}. Then we
exhibit examples to prove Theorem~\ref{mthm:nonmsh} and the first part
(items (1)-(2)) of Theorem~\ref{mthm:nonsec}.

\subsection{Proof of Theorem~\ref{mthm:hdASH}}
\label{sec:proof-thmD}

We have a partially hyperbolic attracting subset whose equilibria, if
any, are hyperbolic of saddle type, for a $C^2$ vector field.

From Remark~\ref{rmk:NUSE-wNU2SE} the trajectories satisfiying NUSE
also satisfy wNU2SE and so, if we have condition (A): the central
bundle is two-dimensional ($d_cu=\dim E^{cu}=2$ then, from
\cite[Corollary G]{ArSal25}, there exists a physical/SRB measure, as
stated.

For conditions (B)-(C), the proof relies on applying the following
useful extension of Pesin's Formula~\cite{Pe77} obtained by
Catsigeras-Cerminara-Enrich~\cite{CatCerEnr15}.

\begin{theorem}[Generalized Pesin's Inequality~\cite{CatCerEnr15}]
  \label{thm:GenPesin}
  For any $C^1$ diffeomorphism $f$, if $\Lambda$ is an invariant
  compact set with a dominated splitting $T_\Lambda M= E\oplus F$,
  then for Lebesgue almost every point $x$ satisfying
  $\omega(x)\subset \Lambda$, the entropy of any weak$^*$ limit
  measure $\mu$ of the sequence
  $\big(\frac1n \sum_{i=0}^{n-1} \delta_{f^i(x)}\big)_{n\ge1}$ is
  bounded from below:
  \begin{align}\label{eq:gpesin}
    h_\mu(f)\geq \int \log|\det (Df\mid_{F})|\,d\mu.
  \end{align}
\end{theorem}

We used this in~\cite{ArSal25}, for $x$ satisfying the (wNU2SE)
condition, to find an ergodic hyperbolic physical/SRB measure as an
ergodic component of a limit measure $\mu$ as above. It is well known
from the work of Ledrappier-Young~\cite{LY85} that for $C^2$ systems
such measures are physical/SRB measures.

In what follows we write $J^{cu}f(w):=|\det(Df\mid_{E^{cu}_w})|$ and
$\psi^{cu}(w):= \log\big\|[\wedge^2 (Df\mid_{E^{cu}_w})]^{-1}\big\|$.

  We start by using Theorem~\ref{thm:GenPesin} to fix a full
  $\leb$-measure subset $X\subset U$ and for $x\in X$ we consider a
  weak$^*$ limit point $\mu$ of the sequence considered in Theorem
  ~\ref{thm:GenPesin}. Then we have
  $h_\mu(f)\ge\int\log J^{cu}f\,d\mu$ from~\eqref{eq:gpesin}.

  We want to obtain the reverse inequality to conclude Pesin's Formula
  $h_\mu=\mu(\log J^{cu}f)>0$ and invoke Ledrappier-Young's main
  result from~\cite{LY85} ensuring, since $f$ is a $C^2$
  diffeomorphism, that $\mu$ admits an ergodic component $\nu$ which
  is a physical/SRB measure, as needed.

    We consider the following two cases: either $\mu$ admits some atom ---
  necessarily a periodic point of $f$ --- or $\mu$ is non-atomic.

  In the latter case, then $\mu(\sing_\Lambda(G))=0$ and a
  $\mu$-generic point $z\in\Lambda$ cannot converge to any
  singularity. Indeed, otherwise we would have for any continuous
  observable $\vfi:M\to\RR$
  \begin{align*}
    \mu(\vfi)=\int\vfi\,d\mu
    =
    \lim_{n\to+\infty}\frac1n\sum\nolimits_{i=0}^{n-1}\vfi(f^iz)
  \end{align*}
  since $z$ is Birkhoff-generic for $\mu$, and
  $\mu(\vfi)=\vfi(\sigma)$ if $z\in W^s(\sigma)$ for some singularity
  $\sigma\in\sing_\Lambda(G)$. Because this holds for any continuous
  observable, we conclude $\mu=\delta_\sigma$ contradicting the
  assumption that $\mu$ is non-atomic.

  Therefore, such Birkhoff-generic $z$ must be in
  $\Lambda^*:=\Lambda\setminus
  \cup\{W^s_\sigma:\sigma\in\sing_\Lambda(G)\}$ and so, by assumption,
  such $z$ satisfies wNU2SE. In particular, using $\psi^{cu}$ as
  observable together with Birkhoff Ergodic Theorem, we obtain
  $\mu(\psi^{cu})\le\eta$.

  This ensures, in particular, that all Lyapunov exponents along
  $E^{cu}$ are either zero (along the flow direction $G$) or strictly
  positive. Hence, we get that
  $\mu(\log J^{cu}f)=\int\Sigma^+\,d\mu\ge-\eta$ gives the averaged sum
  of central-unstable Lyapunov exponents, and we can apply Ruelle's
  Inequality
  $
    h_\mu(f)\le\int\Sigma^+\,d\mu = \mu(\log J^{cu}f).
  $
  We conclude that $\mu$ satisfies Pesin's Formula as needed, and the
  existence of an ergodic component $\nu$ of $\mu$ which is a
  physical/SRB measure follows.

  We are left with the case where $\mu$ admits an atomic
  component. Let $\mu=\alpha\cdot\nu+\beta\cdot\xi$ with
  $\alpha+\beta=1, \alpha\ge0, \beta>0$ be the decomposition of $\mu$
  into a non-atomic component $\nu$ and a purely atomic component
  $\xi$.  We want to use our conditions (B) or (C) to show that this
  case cannot occur.

  Atomic components of invariant probability measures are at most
  denumerably many; such components for a continuous transformation
  are supported on periodic orbits.  Periodic orbits of the time-$1$
  map $f$ of the flow of $G$ are either equilibria $\sigma$ (fixed
  points of the flow) or periodic orbits $p$ of the flow with integer
  minimal period.

  If $p\in\supp\xi$ has minimal period $\ell$, then
  $\pi_p:=\frac1\ell\sum_{i=0}^{\ell-1}\delta_{f^ip}$ is an ergodic
  component of $\xi$ and
  $\pi_p(\log J^{cu}f)=\frac1{\ell}\log J^{cu}f^\ell(p)>0$
  since $p\in\Lambda^*$.

  We now assume condition (B). Then for any
  $\sigma\in\sing_\Lambda(G)\cap \supp\xi$ we have
  $\delta_\sigma(\log J^{cu}f)>0$ and we cannot have $\beta=1$, i.e.,
  $\mu$ cannot be purely atomic. Indeed, in the purely atomic case, we
  have $h_\mu(f)=0\ge\mu(\log J^{cu}f)=\xi(\log J^{cu}f)$ and we can
  write
  \begin{align}\label{eq:xilogJ}
    \xi(\log J^{cu}f)
    =
    \left(\sum\nolimits_{\sigma}t_\sigma\delta_\sigma
    +
    \sum\nolimits_{i\ge1}t_i\pi_{p_i}
    \right)(\log J^{cu})>0, 
  \end{align}
  where $\sum_{i\ge1}t_i+\sum_\sigma t_\sigma =1$ and
  $t_i,t_\sigma\ge0$. This contradicts the previous inequality.

  This contradiction ensures that there exists a non-atomic component
  with positive mass: that is, $\alpha>0$. Since $\nu=\mu-\xi$ is
  also $f$-invariant, we can write
  \begin{align*}
    h_{\mu}(f)
    &=
    \alpha\cdot h_{\nu}(f)+\beta\cdot h_{\xi}(f)
    =
    \alpha \cdot h_{\nu}(f)
    \\
    &\ge
    \mu(\log J^{cu}f)
    =
    \alpha\cdot \nu(\log J^{cu}f)+\beta\cdot \xi(\log J^{cu}f)
    \ge
    \alpha \cdot \nu(\log J^{cu}f).
  \end{align*}
  We have obtained an $f$-invariant non-atomic probability measure $\nu$
  satisfying $h_{\nu}(f)\ge \nu(\log J^{cu}f)$. We can now proceed
  with the same argument as before to obtain a physical/SRB measure.

  We now replace condition (B) with condition (C). Then equilibria
  cannot be atoms of $\mu$ by Remark~\ref{rmk:noatom} However,
  from~\eqref{eq:xilogJ} other periodic points $p$ are also excluded
  since $p\in\Lambda^*$. Therefore, we conclude that $\beta=0$ in this
  case and we recover that $\mu$ is non-atomic. The rest of the
  argument follows and the proof of existence of a physical/SRB
  measure is complete.

  Finally, if $\Lambda$ is transitive, then the ergodic basin
  $B(\mu)$ of $\mu$ covers a neighborhood of $\Lambda$, except perhaps a
  zero volume subset, as a consequence of the properties of $cu$-Gibbs
  states; see e.g.~\cite[Theorem 5.1]{ArSal25}.
This completes the proof of Theorem~\ref{mthm:hdASH}.
  
\subsection{Proof of Theorem~\ref{mthm:SH-ASH-NUSH}}
\label{sec:proof-theorem-refmth}

We first note that we can reduce item (3) to item (1) in the statement
of Theorem~\ref{mthm:SH-ASH-NUSH}, since from~\cite[Theorem
C]{Crovetal2020} all MSH sets whose singularities are active and with
the same index, must be sectional hyperbolic sets.

To prove item (1), we recall that since $\Lambda$ is a
sectional-hyperbolic attracting set, then
$\|DX_t\mid_{E^s_x}\|\le e^{-\lambda t}$ for all $t>0$; and there
exists $T>0$ so that
$\|[\wedge^2(DX_T\mid E^{cu}_x)]^{-1} \|\le e^{\lambda T}/K<1$ for
all $x\in\Lambda$.

Moreover, sectional-hyperbolic attracting sets for $C^2$ vector fields
support finitely many physical/SRB measures $\mu_1,\dots,\mu_p$ whose
basins cover $\m$-a.e. point of an attracting neighborhood $U$ of
$\Lambda$, that is,
$\m( U \setminus B(\mu_1)\cup\dots\cup B(\mu_p))=0$; see
e.g.~\cite{APPV,AraPac2010,LeplYa17,araujo_2021}.

Hence, for $\m$-a.e. $x$ in $U$ there exists an ergodic invariant
measure $\mu=\mu_i$ so that
\begin{align}
  \lim_{n\nearrow\infty}\frac1t\int_0^t
  \log\|DX_T\mid_{E^{cs}_{X_sx}}\|\,ds
  &=
  \int_{x\in\Lambda} \log\|DX_T\mid_{E^{cs}_{x}}\| \,d\mu
  \le
  -\lambda T
    \qand \label{eq:MNUC1}
  \\
  \lim_{t\nearrow\infty}\frac1t\int_0^t
  \log\|[\wedge^2(DX_T\mid_{E^{cu}_{X_sx}})]^{-1}\|\,ds
  &=
  \int_{x\in\Lambda} \log\|[\wedge^2(DX_T\mid_{E^{cu}_{x}})]^{-1}\|\,d\mu
  \le \log\frac{e^{-\lambda T}}K. \nonumber
\end{align}
Thus setting $f=X_T$ and
$\eta=\sup\{-\lambda T, \log(e^{-\lambda T}/K)\}$ and
$\Gamma=B(\mu_1)\cup\dots\cup B(\mu_p)$, we see that $\Lambda$ becomes
a MNUSE attracting set, since we can extend the stable bundle
$E^s_\Lambda$ and center-stable bundle $E^{cu}_\Lambda$ to $U$; see
e.g.~\cite{ArMel17,ArSal25}. This completes the proof of items (1)
and (3).

For item (2a), we use that ASH attracting sets $\Lambda$ whose
equilibria are all hyperbolic of saddle-type and whose center-unstable
directions are two-dimensional, support a physical/SRB measure $\mu$
whose basin $B(\mu)$ has positive volume inside the attracting
neighborhood $U$ of $\Lambda$, that is $\m( U \cap B(\mu))=0$;
see~\cite[Corollary G \& Theorem 1.8]{ArSal25}.

Since the subset $S:=\{W^s_x: \sigma\in\sing_\Lambda(X)\}$, the union
of the stable manifolds of the finitely hyperbolic equilibria of $X$
in $\Lambda$, all of saddle-type, is the union of finitely many
immersed smooth submanifolds, then its volume is zero. In particular
we have that $\mu(S)=0$, for otherwise $\mu$ would contain some
singular point $\sigma\in \sing_\Lambda(X)$ as an invariant atom,
which is impossible for an ergodic physical/SRB measure.

Indeed, arguing by contradiction, either $0<\mu_i(\sigma)<1$ and $\mu$
is not ergodic; or $\mu_i=\delta_\sigma$ and~\eqref{eq:fPesin} cannot
hold, since the entropy $h_{\mu_i}(X_1)$ is zero, but $E^{cu}_\sigma$
contains an expanding direction and so a positive Lyapunov exponent.

Then $\m$-a.e. $x\in B(\mu)$ does not belong to $S$ and we
obtain~\eqref{eq:MNUC1} analogously to the item (1) for any given
positive $T$.
Since we are assuming $\dim E^{cu}_\Lambda=2$, then we obtain
$\log\|[\wedge^2(DX_T\mid_{E^{cu}_x})]^{-1}\|=-\log|\det(DX_T\mid_{E^{cu}_x})|$
and the map $t\mapsto \log|\det(DX_t\mid_{E^{cu}_x})|$ becomes
\emph{additive}. By Kingman's Subbaditive Ergodic Theorem
and~\ref{eq:ASH} 
\begin{align*}
  -\lambda
  \ge
  \int_{x\in\Lambda} \lim_{t\nearrow\infty}
  \frac1t\log|\det(DX_t\mid_{E^{cu}_x})|^{-1}\, d\mu
  =
  \inf_{t>0}\int_{x\in\Lambda}
  \frac1t\log|\det(DX_t\mid_{E^{cu}_x})|^{-1}\,d\mu
\end{align*}
and so there exists $t(\mu)>0$ so that for $T\ge t(\mu)$
\begin{align}\label{eq:tmu}
  \int_{x\in\Lambda} -\log|\det(DX_T\mid_{E^{cu}_x})|\,d\mu
  <-\lambda T.
\end{align}
Thus, for $x\in B(\mu)$ and since $\mu$ is ergodic for the flow
\begin{align*}
\lim_{t\nearrow\infty}\frac1t\int_0^t
  &\log\|\wedge^2(DX_T\mid_{E^{cs}_{X_sx}})^{-1}\|\,ds
  =
    -\lim_{t\nearrow\infty}\frac1t\int_0^t
    \log|\det(DX_T\mid_{E^{cu}_{X_s x}})|\,ds
  \\
  &=
    -\int_{x\in\Lambda} \log|\det(DX_T\mid_{E^{cu}_x})|\,d\mu
    < -\lambda T.
\end{align*}
Hence, we obtain MNUSE with $\Gamma=B(\mu)$,
$\eta=-\lambda T$ and $f=X_T$ as before.

Finally, condition (2b) corresponds to item (B) of
Theorem~\ref{mthm:hdASH}; and (2c) to item (C) of the same Theorem,
thus we obtain a physical/SRB mesure supported on the attracting set
and can take $\Gamma=B(\mu)$ to obtain MNUSE.
This completes the proof of Theorem~\ref{mthm:SH-ASH-NUSH}.

\subsection{Proof of Theorem~\ref{mthm:nonmsh}}
\label{sec:examples}

The following is an example of a sectional-hyperbolic attracting set
containing both Lorenz-like and non-Lorenz-like singularities.

\begin{example}[Connected Lorenz attracting set containing
  non-Lorenz-like equilibria]
  \label{ex:gLorenzNeq}
  Consider the maximal invariant subset $A$ inside the trapping
  ellipsoid $E$ of the Lorenz system of equations, which are sketched
  in Figures~\ref{fig:ellipsoid} and~\ref{fig:Lorenz3sing}.

  \begin{figure}[htpb]
    \centering
    \includegraphics[width=4.5cm]{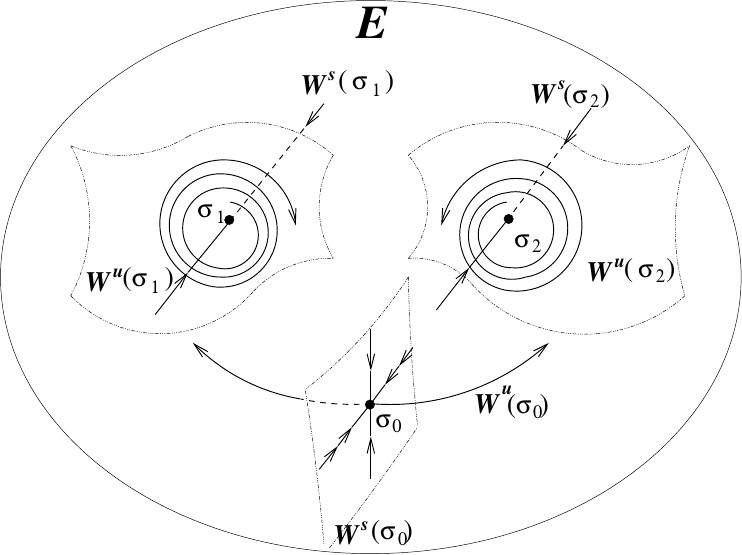}
    \qquad
    \includegraphics[width=4.5cm]{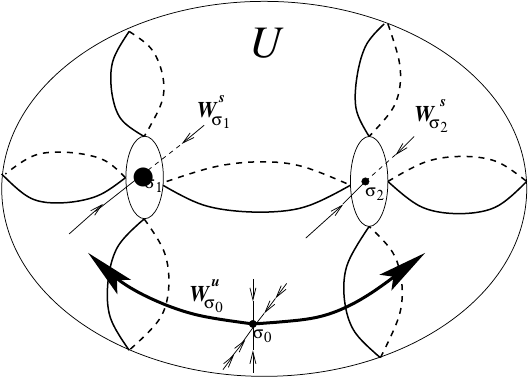}
    \caption{\label{fig:ellipsoid} Local stable and unstable
      manifolds near $\sigma_0, \sigma_1$ and $\sigma_2$, and the
      ellipsoid $E$ on the left hand side; the trapping bi-torus $U$ on
      the right hand side.}
  \end{figure}

  This set contains the classical Lorenz attractor $\Lambda$, which is
  the maximal invariant subset inside the trapping bi-torus $U$ of
  Figure~\ref{fig:ellipsoid} and sectional-hyperbolic, together with
  the pair of hyperbolic saddle-type singularities away from the
  origin and inside the ``lobes'' of the ``butterfly''; see
  e.g.~\cite{AraPac2010}. These equilibria, denoted
  $\sigma_1,\sigma_2$ in Figures~\ref{fig:ellipsoid}
  and~\ref{fig:Lorenz3sing}, have a complex eigenvalue with positive
  real part. Hence, the two-dimensional unstable manifolds
  $W^u(\sigma_1)$ and $W^u(\sigma_2)$ are contained in the attracting
  set.

  \begin{figure}[htpb]
    \centering \includegraphics[width=5cm]{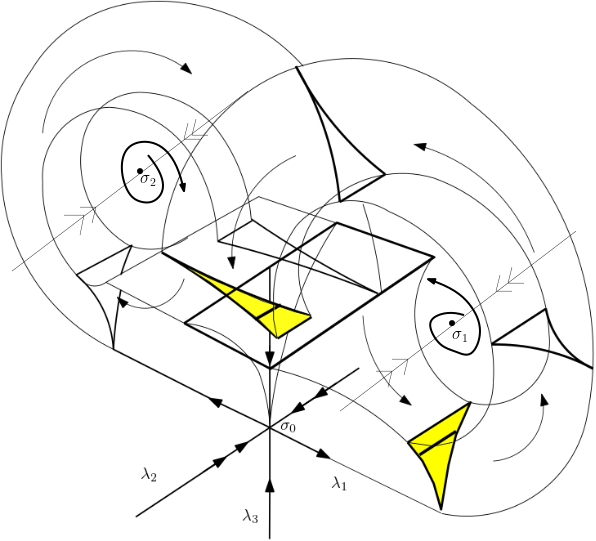}
    \caption{\label{fig:Lorenz3sing} The Lorenz attracting set
      including the geometric Lorenz attractor and the pair of
      hyperbolic saddle-type non-Lorenz like singularities
      $\sigma_1,\sigma_2$ with complex expanding eigenvalue.}
  \end{figure}

  This attracting set $A=\Lambda\cup W^u(\sigma_1)\cup W^2(\sigma_2)$
  is connected, since these unstable manifold accumulate inside the
  geometric Lorenz attractor, and clearly sectional-hyperbolic (and
  singular-hyperbolic) since the hyperbolic splittings of
  $T_{\sigma_1}\RR^3$ and $T_{\sigma_2}\RR^3$ are sectional-hyperbolic,
  and  their unstable manifolds accumulate inside a
  sectional-hyperbolic set.

  However, the equilibria $\sigma_1,\sigma_2$ are not Lorenz-like,
  either for the given vector field, or for its time-reverse.
\end{example}

From Example~\ref{ex:gLorenzNeq}, which is a vector field $Y$ of class
$C^\infty$, we can build a star vector field $X$ in any compact
$3$-manifold $M$ whose non-wandering set contains the geometric Lorenz
attrator, as follows. We start with a Morse-Smale $C^\infty$ gradient
flow $G$ with at least a hyperbolic source equilibrium $\sigma_0$ and
a hyperbolic sink equilibrium $\sigma$. Then we choose a small open
neighborhood $V$ of $\sigma$ and attach a rescaled version of the
vector field from Example~\ref{ex:gLorenzNeq} on the trapping
ellipsoid $E$.

More precisely, let $\psi: U\subset\RR^3\to M$ be a $C^\infty$ smooth
chart so that $U$ is an open neighborhood of $0$, $\psi(0)=\sigma$ and
$\psi(U)=V$, so that $S=\partial V$ is diffeomorphic to the $2$-sphere
$\sS^2$ and the vector field $G$ is inward transverse to $S$. Let
$\vfi:M\to[0,1]$ be a $C^\infty$ bump function such that
$\vfi^{-1}(\{1\})\subset V$ is a compact neighborhood of $0$ and
$\vfi^{-1}(\{0\}\supset \ov{M\setminus V}$.  Let $\rho\in(0,1)$ so
that $\rho E\subset U$ and define
\begin{align*}
  X(x) = (1-\vfi(x))\cdot G(x)+\vfi(x)\cdot D\psi(x)\cdot\Big(\rho
  Y\big(\rho^{-1}\psi^{-1}(x)\big)\Big).
\end{align*}
Since both $G$ and $Y$ are inwardly transverse at $S$ and
$\partial E$, then the critical elements of $X$ are the critical
elements of $G$, with the exception of $\sigma$, together with the
critical elements of $Y$ rescaled to the neighborhood $V$.

Then $X$ is a star flow. Moreover, the non-wandering set $\Omega(X)$
of $X$ contains the (rescaled) attracting set $A$ of
Example~\ref{ex:gLorenzNeq}, together with a hyperbolic source
singularity $\sigma_0$. Therefore, the compact invariant set $M$ for
$X$ is not MSH.

Next is an example of sectional-hyperbolic attracting set with
no Lorenz-like singularities.

\begin{example}[sectional hyperbolic with no Lorenz-like
  singularities]
  \label{ex:SHnoLsing}
  As in the construction of the anomalous (intransitive) Anosov flow,
  Morales~\cite{Morales07} suspend a DA-diffeomorphism on the torus
  $\mathbb{T}^2$ getting an Axiom A vector field $X^0$ on a closed
  manifold $M^0$, whose nonwandering set is the union of a repeller
  periodic orbit $O$ and a hyperbolic attractor $A$. Then, he chooses
  a solid torus neighborhood $U$ of $O$ such way the stable foliation
  of $A$ intersects the boundary $\partial U$ in two Reeb
  components. The core $C$ of the top component is transverse to the
  intersection of the stable manifold of $A$ with $\partial U$; see
  Figure~\ref{fig:cnmsing}. Now, remove the interior of $U$ from $M^0$
  obtaining a new compact manifold $M^1$ with boundary equal to
  $\partial U$. The restriction $X^1 = X^0\vert_{M^1}$ is transverse
  to $\partial M^1 = \partial U$ inwardly pointing to $M^1$.

\begin{figure}[htpb]
  \centering \includegraphics[width=5cm]{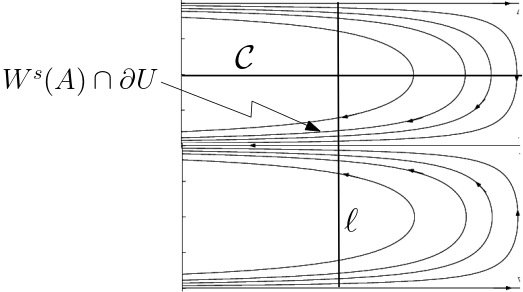}
   \qquad
  \includegraphics[width=5cm]{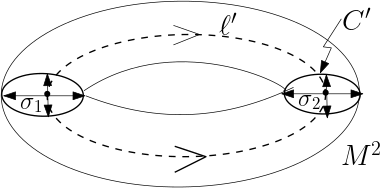}
  \caption{\label{fig:cnmsing} Construction of a sectional-hyperbolic
    with no Lorenz-like singularity.}
\end{figure}

After this, consider a vector field $X^2$ on a solid torus $M^2$
presenting two hyperbolic singularities $\sigma_1$ (a repelling one)
and $\sigma_2$ (a saddle with one-dimensional stable direction). Note
that $X^2$ is transverse to the boundary $\partial M^2$ pointing
outwardly. Moreover, there is a meridian curve $C^\prime$ in
$\partial M^2$ which is the boundary of a disk contained in the
unstable manifold of $\sigma_2$.

Finally, make a Dehn surgery~\cite{goodman83} between $M^1$ and $M^2$
by indentifying $C \in \partial M^1$ with $C^\prime \in \partial M^2$,
obtaining a new closed $3$-manifold $M$. Also, gluing $X^1$ and $X^2$
we can get a new vector field $X$ on $M$. This vector field has
nonwandering set contained in the union of the sectional hyperbolic
attracting set with no Lorenz-like singularities
$A^* = W^{u}(\sigma_2) \cap A$ together with the repelling singularity
$\sigma_1$; see \cite[Theorem A]{Morales07} for more details.

Note that $\sigma_2$ is \emph{not active} and not Lorenz-like. 
\end{example}

Examples~\ref{ex:gLorenzNeq} and~\ref{ex:SHnoLsing} are
sectional-hyperbolic attracting sets which are not multi-singular
hyperbolic according to the definition of Crovisier et al. Moreover,
these examples are $C^r$ robust for any $r\ge1$, that is, any $C^r$
small perturbation of the given vector fields exhibits a
non-multi-singular hyperbolic attracting set. In addition, both
classes of vector fields are star vector fields. This completes the
proof of Theorem~\ref{mthm:nonmsh}.

\subsection{Proof of Theorem~\ref{mthm:nonsec}}
\label{sec:proof-theorem-refmth-1}

Next we present two examples of non-sectional hyperbolic attractors
which are MNUSE.

\begin{example}[Geometric Lorenz-like attractor with non-hyperbolic
  periodic orbit]\label{ex:notsinghyp}

  We construct a geometric Lorenz flow in such a way to obtain, as the
  quotient over the stable leaves of the Poincar\'e first return map
  to the global cross-section of a vector field $G_0$, the following
  adaptation of the ``intermittent'' Manneville~\cite{ManPom80}.
  map 
  into a local homeomorphism of the circle; see
  Figure~\ref{fig:L1drep}:
  consider $I=[-1,1]$ and the map $f:I\to I$
  \[
    x\mapsto f(x):=
    \begin{cases}
      2\sqrt{x}-1 & \text{if $x\ge0$},
      \\
      1- 2\sqrt{|x|} & \text{otherwise}.
    \end{cases}
  \]

  \begin{figure}[htpb]
    \centering \includegraphics[width=3.5cm]{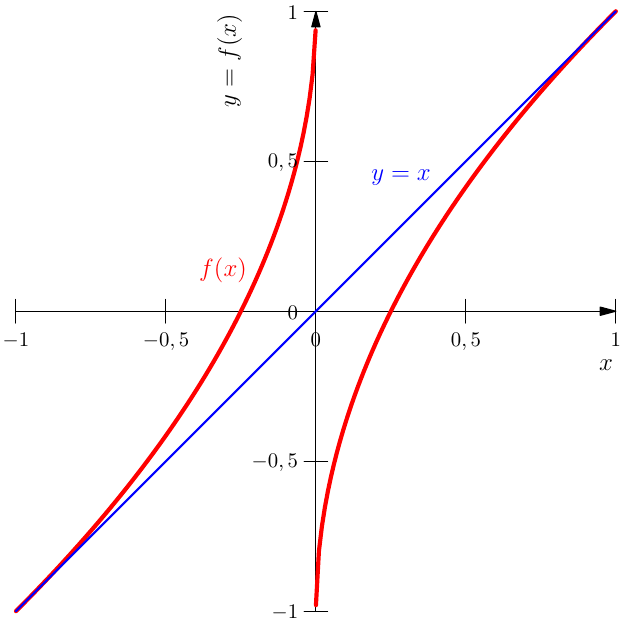} 
\qquad
    \includegraphics[width=4cm]{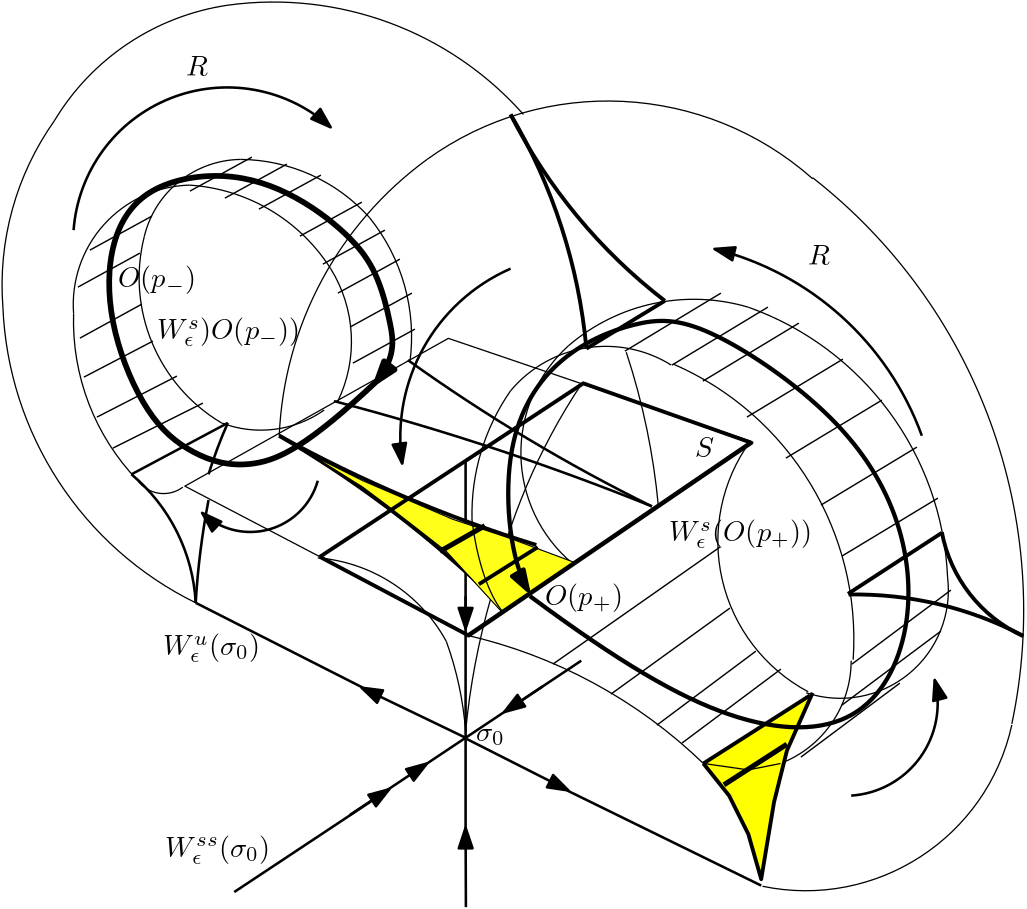}
    \caption{\label{fig:L1drep}Lorenz one-dimensional transformation
      $f$ with repelling fixed points at the extremes of the interval
      on the left; and the geometric Lorenz construction with this map
      as the quotient over the contracting invariant foliation on the
      cross-section $S$, with two corresponding periodic saddle-type
      periodic orbits $\SO(p_{\pm})$.}
  \end{figure}

  Following the standard construction described in~\cite[Chap. 7,
  Sec. 3.4]{AraPac2010}, we can show that the maximal invariant subset
  $\Lambda$ satisfies the MNUSE condition while not being sectional
  hyperbolic; see~\cite[Example 8]{ArSal25} for details. From
  \cite[Theorem E]{ArSal25}, these attractors admit a unique
  physical/SRB measure due to transitivity.
\end{example}

\begin{example}[Geometric Lorenz-like attractor with non-hyperbolic
  equilibrium]
  \label{ex:nonhypeq}
  Recently, Bruin and Farias~\cite{bruin2023mixing} construct
  (similarly to the previous example) a geometric Lorenz-like
  attractor with a neutral equilibrium replacing the hyperbolic
  Lorenz-like equilibrium from the classical (geometrical) Lorenz
  attractor, which is not hyperbolic The authors show that there
  exists a unique physical/SRB measure, ensuring that the attractor
  satisfies MNUSE.
\end{example}

Both Examples~\ref{ex:notsinghyp} and~\ref{ex:nonhypeq} are MNUSE
attracting sets which are neither sectional-hyperbolic, nor
multi-singular hyperbolic, nor ASH. Moreover, they contain a
non-hyperbolic periodic orbit and, thus, do not satisfy the Hyperbolic
Lemma~\ref{le:hyplemma}.

\begin{example}[Rovella attractors]\label{ex:Rovellalike}
  Another class of partially hyperbolic and mostly asymptotically
  sectional expanding attractors sets are the Rovella
  attractors~\cite{Ro93} (also known as contracting Lorenz attractors)
  which recently have been shown to be asymptotically
  sectional-hyperbolic by San Martin and Vivas~\cite{SmartinVivas20}.

  It was shown by Metzger~\cite{mtz001} that these attractors admit a
  physical/SRB probability measure whose basin covers the trapping
  region except a zero volume subset and, moreover, exhibit slow
  recurrence to the equilibrium at the origin. For a short
  presentation of the features of this attractor family,
  see~\cite{araJSP21}.
  
  This is an example of a vector field of class $C^r, r\ge3$,
  with an ASH attractor containing an equilibria which is not
  Lorenz-like, and admits an invariant ergodic physical probability
  measure. Hence, this attractor is ASH but neither SH nor MSH.
\end{example}

This completes the proof of items (1)-(2) of
Theorem~\ref{mthm:nonsec}.






\section{Construction of new examples of  ASH attractors}
\label{sec:multid-exampl}

Here we construct examples proving items (3)-(5) of
Theorem~~\ref{mthm:nonsec}.

We consider a ``solenoid'' constructed over a uniformly expanding map
$g:\TT\to\TT$ of the $k$-dimensional torus $\TT$, for some
$k\ge2$. That is, let $\DD$ be the unit disk on $\RR^2$ and consider a
smooth embedding $F_0:N\circlearrowleft$ of $N=\TT\times\DD$ into
itself, which preserves and contracts the foliation
$\F^s=\big\{\{z\}\times\DD: z\in\TT\big\}$. We will write $E^s$ for
the tangent bundle to the leaves of this foliation. The natural
projection $\pi:N\to\TT$ on the first factor \emph{smoothly
  conjugates} $F_0$ to $g$: $\pi\circ F_0=g\circ\pi$ --- we can assume
that $\pi$ is the projection associated to a tubular neighborhood of
$F_0(N\times\{0\})$. We assume also that
$F_0$ admits a fixed point $p$ and that
$\lambda_1^{-1}\le\|(Dg)^{-1}\|\le\lambda_0^{-1}$ for some fixed
$\lambda_1>\lambda_0>1$.

\subsection{ASH attractor, with non sectionally
  hyperbolic equilibria}
\label{sec:multid-asympt-sectio}

We start with $k=1$, that is, with the three-dimensional Smale
solenoid map.

\subsubsection{The suspension of the solenoid map}
\label{sec:suspens-soleno-map}

We further consider the constant vector field $X:=(0,1)$ on
$M_0=N\times[0,1]$, which defines a transition map from
$\Sigma_\epsilon=N\times\{\epsilon\}$ to
$\Sigma_{1-\epsilon}=N\times\{1-\epsilon\}$ for some fixed small
$\epsilon>0$, which is the identity in the first coordinate when
restricted to $\Sigma_\epsilon$. Next we modify this field on the
cylinder $\cC=U\times\DD\times[0,1]$ around the periodic orbit of the
point $p=(z,0)\in N\times\{0\}$, where $U$ is a small neighborhood of
$z$ in $\TT$, in such a way as to create both two equilibria
$\sigma_1,\sigma_2$, with either $k$ expanding and $3$ contracting
eigenvalues, or $1$ expanding and $k+2$ contracting eigenvalues, as
follows.  We fix $k=1$, so that $\TT=\sS^1$ and $U$ is an interval;
and ignore the stable foliation along $\DD$ in the next arguments.

\begin{figure}[htbp]
\includegraphics[width=7cm,height=6cm]{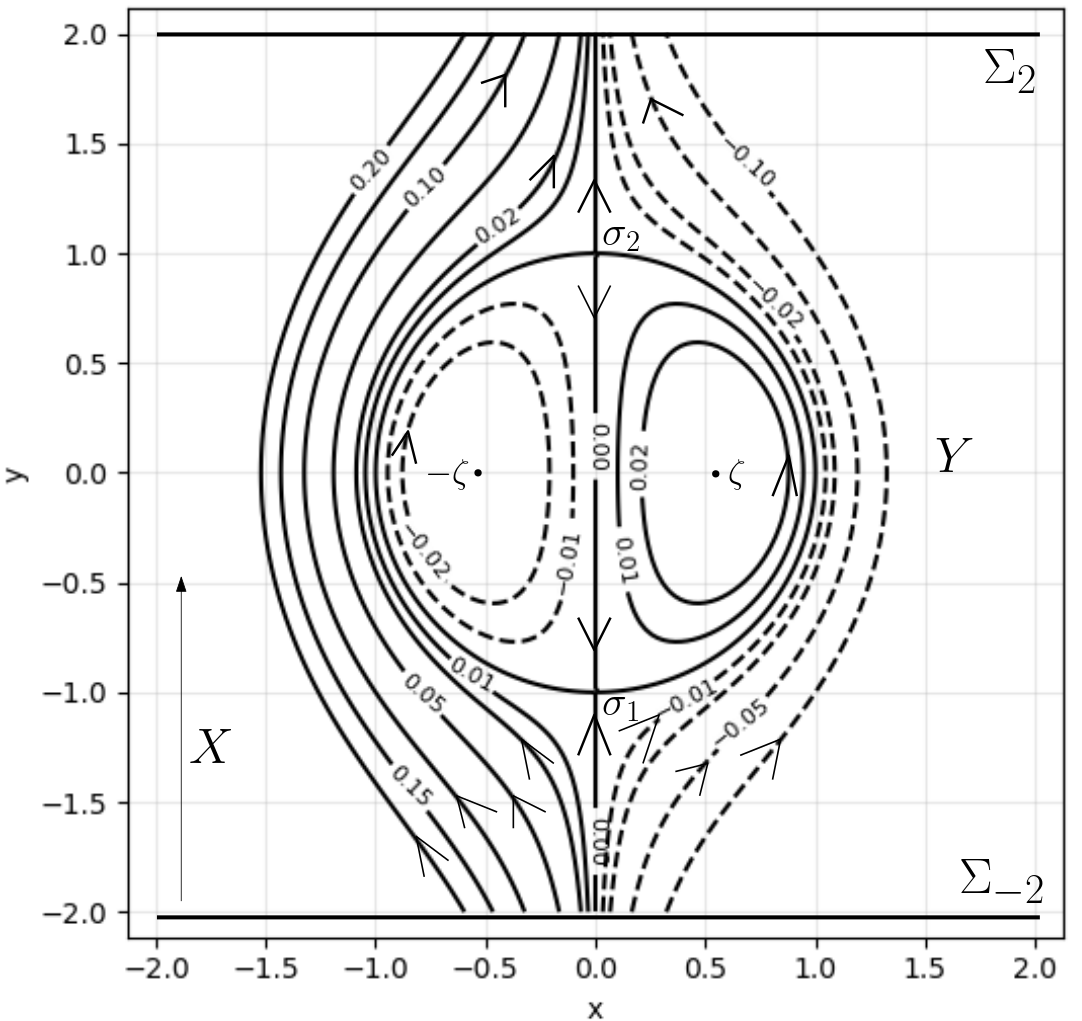}
\caption{A sketch of the vector field $Y$ with its Poincar\'e map from
  $\Sigma_{-2}$ to $\Sigma_2$ compared with the vector field $X$ in
  the square $[-2,2]\times[-2,2]$.}
\label{fig:H0}
\end{figure}
  
We consider the vector field $Y_0$ on the rectangle
$C_0=[-3,3]\times[-2,2]$ depicted in Figure~\ref{fig:H0} obtained from
the level curves of
$H(x,y):=x\big(1-(x^2+y^2)\xi_0(x)-2(1-\xi(x))\big)/10$, where
$\xi:\RR\to[0,1]$ is a smooth bump function $\xi_0:\RR\to[0,1]$ so
that $\xi_0\mid_{[-2,2]}\equiv1$,
$\xi_0\mid_{\RR\setminus[-3,3]}\equiv0$, $\xi_0(-x)=\xi_0(x)$ and
$\xi_0\mid_{\RR^+}$ is decreasing.

The field $Y_0$ is the  Hamiltonian vector field
$Y_0:= (H^\prime_y, -H^\prime_x)$ in the plane.
This ensures that \emph{$Y_0$ is conservative restricted to $C_0$} ---
recall that we are ignoring the uniformly contracting directions $E^s$
tangent to the foliation $\cF^s$.

We note that for $x\ge3$ we have $Y=X$.  Moreover, for $x\le2$ we can
explicitly write
 \begin{align*}
   Y_0(x,y)
   &=
     \left(-\frac{x y}5 , \frac{3x^2+y^2-1}{10}\right).
 \end{align*}

\subsubsection{Poincar\'e transition map is the identity}
\label{sec:poincare-transit-map}

The symmetry of the Hamiltonian ensures that the level curves of $H$
with non-zero values are symmetric with respect to the transformation
$S:(x,y)\mapsto(-x,y)$, i.e., $S(H^{-1}(\{\zeta\}))=H^{-1}(\{-\zeta\})$ for
$\zeta\neq0$ and these level curves are trajectories of the flow with
positive speed in the $y$ direction. 

\begin{claim}\label{cl:trId}
  The transition Poincar\'e map from $\Sigma_{-2}=[-3,3]\times\{-2\}$
  to $\Sigma_2=[-3,3]\times\{2\}$ is the identity away from the point
  $(0,-2)$
\end{claim}

\begin{remark}[time to cross]
  \label{rmk:crosstime}
  For $2\le|x|\le3$ the flow on $C_0$ has a vertical speed along the
  positive direction of the $y$-axis of at least
  $(2^2-1)/10=3/10$. Hence, starting from $(x,-2)$ the flow arrives at
  $(x,2)$ after a time of $t(x)\le 4\cdot 10/3\le16$.
\end{remark}

\subsubsection{Non-sectional hyperbolic equilibria}
\label{sec:non-section-hyperb}
  
The eigenspace of one of the contracting (expanding) eigenvalues of the
equilibria $\sigma_1,\sigma_2$ lies along the vertical direction (the
direction of $X$), \emph{the other two-dimensional contracting
  directions still lie on the direction of $\DD$ (ignored in the
  pictures)}, and the remaining expanding/contracting eigenspaces are
transversal to the vertical $X$ direction; see
Figure~\ref{fig:H0}. There are also a pair of fixed elliptic
equilibria represented by $\pm\zeta$.

We have $\sigma_i=(0,(-1)^i), i=1,2$ and $\zeta=(\sqrt3/3,0)$
so that
\begin{align}\label{eq:DY0}
  DY_0(\sigma_i)=
  \begin{bmatrix}
    (-1)^{i+1}/5
    & 0
    \\ 0
    & (-1)^i/5
  \end{bmatrix}
      \quad\&\quad
      DY_0(\pm\zeta)
      =\pm
      \begin{bmatrix}
        0
        & -\sqrt3/15
        \\
        \sqrt3/25
        &0
      \end{bmatrix}.
\end{align}
This shows that $\sigma_i$ are hyperbolic saddles which are \emph{not
  sectionally hyperbolic}: neither sectionally expanding, nor
sectionaly contracting, since their traces vanish.

\subsubsection{Partial hyperbolic attractor}
\label{sec:partialy-hyperb}

After rescaling, we assume that $Y_0$ is defined in the initial cylinder
$\cC$, by setting the coordinates corresponding to the factor $\DD$
equal to zero.  We also assume that the standard inner product
satisfies $\langle Y_0, X\rangle>0$ on the Poincar\'e sections
$\Sigma_{\epsilon}\cup\Sigma_{1-\epsilon}$ corresponding to
$\Sigma_{-2}\cup\Sigma_2$; and take a $C^\infty$ bump function
$\psi:[0,1]\circlearrowleft$ so that
$\psi\mid_{[\epsilon/2,1-\epsilon/2]}\equiv0$ and
$\psi\mid_{[0,\epsilon/3]\cup[1-\epsilon/3,1]}\equiv1$. Then, we
define the vector field
\begin{align}\label{eq:attached}
  G_0(x,u):=
  \psi(u)\cdot X+(1-\psi(u))\cdot Y_0(x,u), \quad (x,u)\in M_0
\end{align}
which generates a smooth transition map $L$ from
$\Sigma_0^*=(N\setminus\{p\})\times\{0\}$ to
$\Sigma_1=N\times\{1\}$. \emph{Since the Poincar\'e transition maps of
  both $X$ and $Y$ are the identity, then $L=Id$.}

Together with the identification $(x,0)\sim(F_0(x),1), x\in N$ we
obtain a smooth parallelizable manifold $M=M_0/\sim$ where $G_0$ induces
a $C^\infty$ vector field which we denote by the same letter. We write
$(\phi_t)_{t\in\RR}$ for the induced flow.
We also have an attracting subset $\Lambda=\cap_{t\ge0}\phi_t(M)$ with
$M$ as topological basin of attraction.

\emph{The Poincar\'e first return map of this vector field
  $P:\Sigma_0^*\to\Sigma_0$ coincides with
  $F_0\mid_{N\setminus\{p\}}$.} In particular,
$\Lambda_0:= \cap_{n\in\ZZ_0^+}F_0^n(N)$ has an open and dense subset
of dense trajectories, and so the flow $\phi_t$ of $G$ is transitive
on $\Lambda$. Thus, $\Lambda$ is an attractor.

Since $\Lambda_0$ admits a $DF_0$-invariant hyperbolic splitting
$T_{\Lambda_0}N=E^s_{\Lambda_0}\oplus E^u_{\Lambda_0}$, and we may
assume without loss of generality that the contracting rate along
$E^s$ is stronger than the contracting eigenvalues of
$\sigma_1, \sigma_2$, then setting
\begin{align*}
  E^s_{(w,t)}:=D\phi_t(E^s_w) \quad \& \quad
  E^{cu}_{(w,t)}:=D\phi_t(E^u_w)\oplus \RR\cdot
  X, \quad w\in\Lambda_0, t\in[0,1);
\end{align*}
we obtain a $D\phi_t$-invariant and continuous splitting
$T_\Lambda M = E^s \oplus E^{cu}$ which is partially hyperbolic.

\subsubsection{Asymptotical sectional expansion}
\label{sec:asympt-section-expan}

Since the area along any $2$-plane of $T(U\times[0,1])$ is preserved,
we get $\psi^{cu}(w)=\log\|\wedge^2 (D\phi_1(w)\mid E^c_w)^{-1}\|=0$.

Hence, given $w\in\Sigma_0^*$ whose future trajectory visits $\cC$
infinitely many times, there exist sequences $n_i<m_i<n_{i+1}$ of
iterates so that $n_0=0$ and, for $j=n_i,\dots,m_i-1$, we have
$f^jw=\phi_1^j(w)\in\cC$; and $f^jw\in M\setminus\cC$ for
$j=m_i,\dots,n_i-1$. The previous argument shows that
\begin{align}\label{eq:cancel}
  \sum\nolimits_{j=n_i}^{m_i-1}\psi^{cu}(f^jw)=0.
\end{align}
Since on $M\setminus\cC$ the time-$1$ map on $\Sigma_0^*$ coincides
with $P$, then for $f^i(w)\in M\setminus\cC$ we can assume that
$f^i(w)\in\Sigma_0^*$ and obtain
\begin{align}\label{eq:NUSE00}
  \sum\nolimits_{j=m_i}^{n_i-1}\psi^{cu}(f^jw)
  \le
  -(n_i-m_i)\log\lambda_0.
\end{align}
Thus, we can write (grossly underestimating the number of iterates
outside of $\cC$ from $0$ to $n_{i+1}$ by $i$)
\begin{align*}
  \frac1{n_{i+1}}\sum\nolimits_{j=0}^{n_{i+1}-1}\psi^{cu}(f^jw)
  \le
  \frac{i-n_i}{n_{i+1}}\log\lambda_0. 
\end{align*}
We note that close to the stable manifold $W^s(\sigma_1)$ of
$\sigma_1$ in $\cC$ the time-$1$ map takes a potentially unbounded
amount of iterates to cross $\cC$ from bottom to top. Moreover, given
$\epsilon>0$ we can find $\delta>0$ so that any $w\in\Sigma_0^*$,
which visits a small neighborhood $B_\delta(p)$ away from $p$
infinitely many times, satisfies $i/n_i\le\epsilon$ as
$i\to\infty$. Thus
$\frac{i-n_i}{n_{i+1}}\log\lambda_0<(1-\epsilon_0)\log\lambda_0$ for
all large enough $i$.

This shows that the flow satisfies the (wNU2SE) condition for
all trajectories which visit $B_\delta(p)\setminus\{p\}$ infinitely
many times or just a finite number of times. All trajectories of
$w\in\Lambda$ which do not converge to $\sigma_1$
(i.e. $w\in\Lambda\setminus W^s(\sigma_1)$) as well as all points of
the stable leaf $W^s(w)$ in $M$ satisfy this.

Since points $w$ whose trajectories do not pass through the point $p$
form a full Lebesgue measure subset, we have shown that the attractor
$\Lambda$ satisfies wNU2SE.

\begin{remark}\label{rmk:wNU2SElarge}
  Moreover, this also holds for all trajectories of the ambient space
  $M$ which do not converge to the singularities.
\end{remark}
Hence, the flow admits a unique physical/SRB measure $\mu$ from
Theorem~\ref{mthm:hdASH} with full basin: $\leb(M\setminus B(\mu))=0$.

\begin{remark}[non-robustness]
  The properties of the flow $Y_0$ are clearly not robust: the
  perturbation $\wh{Y}(x,y):=(H'_y(x,y)-x/10, -H'_x(x,y))$ has
  trajectories sketched in Figure~\ref{fig:HamFlowPert}.
  \begin{figure}[htpb]
    \centering
    \includegraphics[width=5cm,height=4cm]{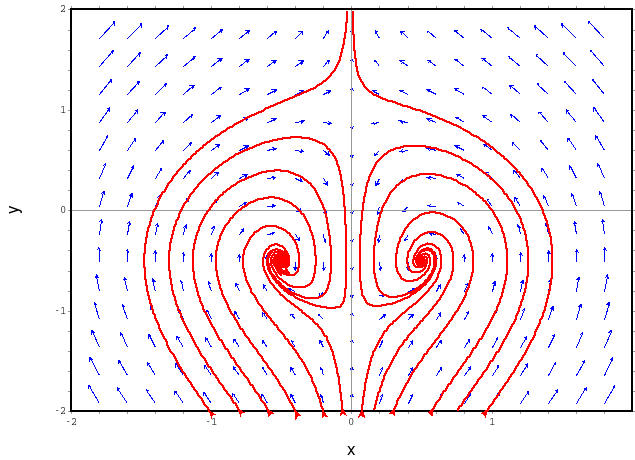}
    \caption{\label{fig:HamFlowPert} Sketch of the flow of the vector
      field $\wh{Y}$ with some trajectories, showing that trajectories
      starting close to $(0,-2)$ (that is, close to $p$ in the
      original suspension flow) will fall into sinks.}
  \end{figure}
\end{remark}

\subsection{ASH attractor with mixed sectionally
  hyperbolic equilibria}
\label{sec:multid-ash-attract}

We now modify the previous example to get sectional-hyperbolic
equilibria.  We keep the following symmetry relations from
$Y_0(x,y)=\big(Y_0^1(x,y),Y_0^2(x,y)\big)$
\begin{align}\label{eq:symmetry}
  Y_0^1(-x,y)=-Y_0^1(x,y)=Y_0^1(x,-y)
  \quad\&\quad
  Y_0^2(\pm x,\pm y)=Y_0^2(x,y)
\end{align}
by setting
$$Y_1(x,y) := \big(H^\prime_y(x,y), -2\cdot H^\prime_x(x,y)\big).$$
It is straightforward to check that
\begin{itemize}
\item $\diver (Y_1)=-H^{\prime\prime}_{yx}=y/5$; 
\item the $y$-axis $\{x=0\}$ is still invariant; and
\item the points $\sigma_1,\sigma_2$ and $\pm\zeta$ are still
  equilibria;
\item for the equlibria $\sigma_1, \sigma_2$ we obtain the same
  properties as in~\eqref{eq:DY0} with the second row multiplied by
$2$.
\end{itemize}
This provides sectional hyperbolicity: $\sigma_1$ becomes a
sectionally contracting (``Rovella-like'') singularity along
$E^{cu}_{\sigma_1}$; while $\sigma_2$ becomes a sectionally expanding
(``Lorenz-like'') singularity along $E^{cu}_{\sigma_1}$.

\begin{remark}[crossing time]
  \label{rmk:crosstime2}
  Again, for $2\le|x|\le3$ the flow $Y_1$ on $C_0$ has a vertical speed
  along the positive direction of the $y$-axis of at least
  $(2^2-1)/5=3/5$. Hence, starting from $(x,-2)$ the flow arrives at
  $(x,2)$ after a time of $t(x)\le4\cdot 5/3\le8$.
\end{remark}



\subsubsection{Poincar\'e transition map is the identity}
\label{sec:poincare-transit-map-1}

This vector field also satisfies Claim~\ref{cl:trId} since we have the
following properties of the trajectories of $Y_1$. 
\begin{lemma}[symmetric solutions]
  \label{le:symmetric}
  For any $\epsilon>0$, the trajectories
  $\big(\gamma(t)\big)_{t\in(-t_0,t_0)}$ of a vector field $Y_1$ with
  $\gamma(t)=\big(x(t),y(t)\big)$ satisfying~\eqref{eq:symmetry} are
  such that
  \begin{enumerate}[(a)]
  \item $\wt\gamma(t):=-\gamma(t), t\in(-t_0,t_0)$ is a
    trajectory of $-Y_1$; and
  \item $\wh{\gamma}(t):=(x(t),-y(t)), t\in(-t_0,t_0)$ is also a
    trajectory of $-Y_1$.
  \end{enumerate}  
\end{lemma}

\begin{proof}
  Just observe that since $x'(t)=Y_1^1\big(\gamma(t)\big)$ and
  $y'(t)=Y_1^2\big(\gamma(t)\big)$ 
  \begin{align*}
    \wt\gamma^\prime(t)
    &=
      -\gamma'(t)
      =
      -Y_1\big(\gamma(t)\big)
      =
      -Y_1\big(-\gamma(t)\big)
    =
    -Y_1\big(\wt\gamma(t)\big); \qand
    \\
    \wh{\gamma}^\prime(t)
    &=
      \big(x'(t),-y'(t)\big)
      =
      \Big(Y_1^1\big(x(t),y(t)\big) ,
      -Y_1^2\big(x(t),y(t)\big)\Big)
    \\
    &=
      \Big( - Y_1^1\big(x(t),-y(t)\big) ,
      -Y_1^2\big(x(t),-y(t)\big)\Big)
      =
      -Y_1(\wh\gamma(t));
  \end{align*}
  for each $-t_0<t<t_0$.
\end{proof}

To prove the claim, we note that from Lemma~\ref{le:symmetric}, for
each trajectory $\gamma(t)$ starting at $(x_0,-2)$ with $x_0\neq0$ and
crossing $\cC$ to a point $(x_1,2)$, there corresponds a trajectory
$\wh\gamma(t)$ of $-Y_1$, which starts at $(x_0,2)$ and crosses to the
points $(x_1,-2)$; see Figure~\ref{fig:symmetry}.

\begin{figure}
  \includegraphics[height=5cm]{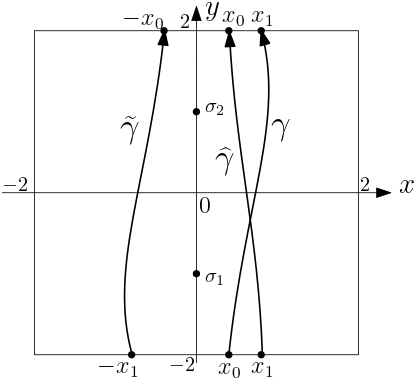}
  \caption{\label{fig:symmetry} Sketch of the trajectories $\gamma$,
    $\wt\gamma$ and $\wh\gamma$ of $Y_1$ if $0<x_0<x_1$.}
\end{figure}

We claim that $x_0=x_1$.  Arguing by contradiction, if $0<x_0<x_1$, we
have a pair of trajectories of a flow starting at $(x_0,-2)$ and
$(x_1,-2)$; and crossing to the points $(x_1,2)$ and $(x_0,2)$. Since
the order was exchanged, there must be an intersection of the
trajectories. This contradiction proves the claim.

Property (a) from Lemma~\ref{le:symmetric} ensures that the same
argument holds for $x_0<0$.
Since all trajectories starting at $(x_0,-2)$ with $x_0\neq0$ cross to
$(x_0,2)$, we have proved Claim~\ref{cl:trId}.

\begin{claim}[heteroclinic connection]
  We have the heteroclinic connection $W^u(\sigma_1)=W^s(\sigma_2)$.
\end{claim}

Indeed, since all points $(x_0,-2)$ with $x_0\neq0$ cross to a point
$(x_0,2)$, then we keep the heteroclinic connection --- for otherwise,
some trajectory in $W^u(\sigma_1)$ would cross to a point $(x_1,2)$
for some $x_1\neq 0$ (since the $y$-axis
is still invariant) and this contradicts the symmetry of solutions.

\subsubsection{Asymptotically sectional expansion}
\label{sec:asympt-section-expan-1}

Considering the flow $G_1$ obtained from $Y_1$ as
in~\eqref{eq:attached}, the symmetry of the flow on $\cC$ enables us
to use similar arguments as in
Subsection~\ref{sec:asympt-section-expan}: given $w\in\Sigma_0^*$
whose trajectory crosses $\cC$ infinitely many times, we consider the
sequence $n_i<m_i<n_{i+1}$ of iterates bounding visits of the
trajectory of $w$ in $\cC$; and note that the regions
\begin{align*}
  \{\wh{w} \in \cC: \psi^{cu}(\wh{w})=\diver(Y_1)(\wh{w})>0\}
  \qand
  \{\wh{w} \in \cC: \psi^{cu}(\wh{w})=\diver(Y_1)(\wh{w})<0\}
\end{align*}
are symmetric with the same size: they correspond to the upper half
($y>0$) and lower half ($y<0$) of the cylinder $C_0$.  Both are
traversed by each trajectory through $\cC$ in a symmetric way using
the same number of iterates modulo a finite difference. Hence, we can
write~\eqref{eq:cancel}. We also keep~\eqref{eq:NUSE00}.  So wNU2SE
follows. In addition, Remark~\ref{rmk:wNU2SElarge} also
holds.

From Lemma~\ref{le:symmetric} we have that the attracting set admits
dense trajectories, and so we have a unique
physical measure from Theorem~\ref{mthm:hdASH}. 

\subsection{Higher co-dimensional ASH attractor}
\label{sec:multidimesional-ash}

Now we restart with $k=\ell+1$ the previous suspension flow
construction, for any fixed $\ell\ge1$, and adapt the two-dimensional
vector field $Y_1$ by considering the vector field $Y_2$ in the
cylinder $C_1=[-3,3]\times B^\ell\times[-2,2]$, where $\|y\|_2$ is the
Euclidean norm; $B^\ell=\{y\in\RR^\ell:\|y\|_2\le 3\}$ is the closed
unit ball with radius $3$ centered at the origin, and $Y_2$ is given
by
\begin{align}\label{eq:Y2}
  Y_2(x,y,z):=(H'_z(x,z),\omega\cdot\xi_1(\|y\|_2^2)\cdot y,-2H'_x(x,z)),
  \quad (x,y,z)\in C_1
\end{align}
for some fixed $\omega>0$. Here, $\xi_1:\RR\to[0,1]$ is a smooth bump
function such that $\xi_1\mid_{\RR^+}$ is decreasing;
$\xi_1(-t)=\xi_0(t)$ for all $t\in\RR$; $\xi_1\mid_{[-4,4]}\equiv1$
and $\xi_1\mid_{\RR\setminus[-9,9]}\equiv0$.

\begin{remark}[explicit solution and crossing time]
  \label{rmk:crosstime3}
  We can explicitly solve for $y$ with initial condition
  $y_0\in B^\ell$ with $\|y\|_2<2$: $y(t)=y_0e^{\omega t}$ for
  $0\le t\le \omega^{-1}\log(2/\|y_0\|_2)$.

  Since the $x,z$-components of $Y_2$ coincide with the components of
  $Y_1$, the crossing time of the flow from $\Sigma_{-2}$ to
  $\Sigma_2$ is again at most $8$ for $2\le |x|\le3$.
\end{remark}

\subsubsection{(Sectional-)Hyperbolicity of equilibria}
\label{sec:section-hyperb-equil-1}

It is easy to see that the equilibria are
$\sigma_i=(0,0,(-1)^i), i=1,2$ and $\zeta=(\sqrt3/3,0,0)$; where
$\zeta$ is non-hyperbolic, $\sigma_{1,2}$ are both hyperbolic of
saddle-type, with $\sigma_2$ generalized Lorenz-like and $\sigma_1$
generalized Rovella-like.

Moreover, we have for $w=\sigma_i, i=1,2$
\begin{align}
  \label{eq:cuJac}
  \psi^{cu}(w)
  =
    (-1)^{i+1}/5 \qand
    \log J^{cu}f(w)
  =
    \omega + (-1)^{i+1}/5.
\end{align}

\subsubsection{Sectional hyperbolicity along $C_1$}
\label{sec:section-hyperb-along}

We can write, using the product structure of $C_1$, for $w\in C_1$,
$w=(x,y,z)$ so that $\varrho^2=x^2+\|y\|_2^2\le4$
\begin{align*}
  DY_2(x,y,z)=
  \begin{pmatrix}
    -z/5 & 0 & -x/5
    \\
    0 & \omega & 0
    \\
    6x/5 & 0 & 2z/5
  \end{pmatrix};
\end{align*}
and so the subbundles $E^1_w:=\RR\times 0^\ell\times\RR$ and
$E^2_w=0\times\RR^\ell\times 0$ are $D\phi_t$-invariant under the flow
$\phi_t$ of $Y_2$ inside the subcylinder $\varrho\le4$. While $w$ and
$\phi_{[0,t]}(w)$ are in this subcylinder, for some $t>0$, we have the
following domination property
\begin{align*}
  \|D\phi_t\mid E^1_w\|
  \le
  e^t
  <
  e^{\omega t }
  =
  \|\big(D\phi_t\mid E^2_w)^{-1}\|;
\end{align*}
as long as $\omega>1$.
This ensures that the least expansion along any $2$-subspace
by $D\phi_1\mid E^c_w$ at $w=(x,y,z)\in C_1$ is achieved along the
$E^1_w$-subbundle, that is
\begin{align*}
  \big\|\wedge^2\big( D\phi_1\mid E^c_w \big)^{-1}\big\|
  =
  \big\|\wedge^2\big( D\phi_1\mid E^1_w \big)^{-1}\big\|
  =
  \big\|\wedge^2\big(D\wh{\phi}_1\mid E^c_{(x,z)}\big)^{-1}\big\|;
\end{align*}
where $(\wh{\phi}_t)_{t\in\RR}$ is the flow of the vector field $Y_1$
from the previous subsection.

The symmetry of the trajectories in the $(x,z)$ variables together
with the above choice of $\omega$, ensures that, for each trajectory
crossing $C_1$, the portion of the trajectory covering the region
$z<0$ contributes to the sum~\eqref{eq:cancel} by the same amount, but
of opposite sign, as the portion of the same trajectory covering the
region $z>0$.  Hence, we reobtain~\eqref{eq:cancel}.

  \subsubsection{Asymptotical sectional expansion}
  \label{sec:asympt-section-expan-6}
  
  We observe that we do not necessarily have the Poincar\'e map $P$
  coinciding with the original expanding map $g$, since now we do not
  have symmetry on the $y$ variable, although the transition map from
  $(x,y,-2)$ to $(x,\bar y,2)$ keeps the $x$-variable.

  By construction, the transition map expands the $y$-variable close
  to $0$, but admits a contraction away from $0$ due to the use of the
  bump function in the definition of $Y_2$. Nevertheless, by
  Remark~\ref{rmk:crosstime3} the crossing time of the possible
  contracting region is at most $2$, the contraction rate is bounded;
  and we may assume the value of $\lambda_0>1$ large enough so that
  the transition map of the flow of the vector field $G_3$, obtained
  from $Y_3$ by the same procedure as~\eqref{eq:attached}, is still
  uniformly expanding.  Therefore, we also keep~\eqref{eq:NUSE00} with
  $\wh{\lambda_0}>1$ in the place of $\lambda_0$.

  Thus, the same argument from the previous
  Subsections~\ref{sec:asympt-section-expan}
  and~\ref{sec:asympt-section-expan-1} provides the wNU2SE property,
  on all trajectories not converging to a singularity, as in
  Remark~\ref{rmk:wNU2SElarge}.
  
  Thus, $\Lambda$ becomes an attractor with $d_{cu}=\ell+2$ for
  any fixed positive integer $\ell \ge1$ and satisfying NUSE.

  Finally, from~\eqref{eq:cuJac}, we have condition (B) of
  Theorem~\ref{mthm:hdASH}, and so we can ensure existence and
  uniqueness of a physical/SRB measure for this flow.

\section{Construction of non-uniformly sectional expanding
   attractors}
\label{sec:new-examples-non}

Here, we construct examples proving items (6)-(7) of
Theorem~~\ref{mthm:nonsec}.

We extend symmetrically the vector fields $Y_0, Y_1$ to
three-dimensional versions first.

\subsection{Higher co-dimensional NU2SE with non-sectional hyperbolic
  equilibria}
\label{sec:multid-ash-}

We now assume that $k=2$ and rotate the setup of Figure~\ref{fig:H0}
around the vertical axis: we set for
$(\varrho,\theta,z)\in[0,3]\times[0,2\pi]\times[-2,2]$
\begin{align*}
  Y_3(\varrho \cos\theta, \varrho \sin\theta, z) :=
  H^\prime_y(\varrho,z)\cdot(\cos\theta,\sin\theta,0)-
  H^\prime_x(\varrho,z)\cdot X;
\end{align*}
the corresponding symmetrized vector field from the plane hamiltonian
vector field $Y_0$.

\begin{remark}[consequences of symmetry]
  \label{rmk:symmetry}
  The symmetry ensures that $Y_3$ also satisfies
  Claim~\ref{cl:trId}. Hence, the flow $\phi_t$ of $G_3$ induces a
  transition map $L$ from $\Sigma_0^*=(N\setminus\{p\})\times\{0\}$ to
  $\Sigma_1=N\times\{1\}$ \emph{which is the identity}, as in
  Subsection~\ref{sec:poincare-transit-map}.

  Moreover, every vertical plane containing the $z$-axis, i.e, with
  equation $ax+by=0$ for any pair $(a,b)\neq(0,0)$, is preserved by
  the flow along with its area.
\end{remark}

\subsubsection{Hyperbolic and non-sectional hyperbolic equilibria}
\label{sec:hyperb-non-section}

We can write more explicitly for $\varrho\le2$, since
$\varrho^2=x^2+y^2$ 
\begin{align*}
  Y_3(x,y,z)
  &=
    -\frac{\varrho z}5\cdot\frac{(x,y,0)}{\sqrt{x^2+y^2}}
    +\left(0,0,\frac{3\varrho^2+z^2-1}5\right)
    =
    \left(-\frac{xz}5,-\frac{yz}5,\frac{3x^2+3y^2+z^2-1}{10}\right)
\end{align*}
and it is now easy to calculate
\begin{align*}
  DY_3(x,y,z)
  =
  \begin{pmatrix}
    -z/5 & 0 & -x/5
    \\
    0 & -z/5 & -y/5
    \\
    3x/5 & 3y/5 & z/5
  \end{pmatrix}
                  \qand
                  \diver(Y_3)\equiv -z/5.
\end{align*}
It is easy to see that equilibria are given by the pair
$\sigma_i=(0,0,(-1)^i), i=1,2$, of hyperbolic saddles which are not
sectionally hyperbolic; and
$\zeta(\alpha):=(\sqrt3/3)\cdot(\cos\alpha,\sin\alpha,0),
\alpha\in[0,2\pi)$, a circle of elliptical fixed points.

We attach $Y_3$ to the suspension flow $X$ as in~\eqref{eq:attached}
obtaining a $(k+3)$-dimensional flow $G_3$ (recall that here $k=2$ and
the stable direction is two-dimensional).



\subsubsection{Asymptotic sectional expansion}
\label{sec:asympt-section-expan-3}

From Remark~\ref{rmk:symmetry} at any point $w\in C_0$ there exists a
$2$-plane whose area is preserved by $D\phi_t(w)=Df(w)$.

For a point $w\in(U\setminus\{p\})\times\{0\}$ close to $p$, the time
$\tau(w)$ needed to cross $\cC$ can be estimated as
$\tau(w)\le C\cdot|\log d(w,p)|$ for some constant $C>0$, since $p$
belongs to the stable manifold of the hyperbolic saddle equilibria
$\sigma_1$. This ensures that $\tau$ is $\leb$-integrable.

The exterior product $\|\wedge^2\big(D\phi_t\mid E^c_w\big)^{-1}\|$ is
bounded above by $\|D\phi_t\|^2$, and from the Linear Variational
Equation and the Gronwall's Inequality $\|D\phi_t(w_0)\|\le e^{t \|DY_3\|}$,
where $\|DY_3\|=\sup_{w\in \cC}\|DG_3(w)\|$ for any $w_0\in\cC$. Hence
we get
\begin{align}\label{eq:highest}
  \|DY_3\|\tau(w)
  \ge
  \sum\nolimits_{i=0}^{[\tau(w)]} \psi^{cu}(f^i(w)).
\end{align}
Arguing as in Subsection~\ref{sec:asympt-section-expan}, given
$w\in\Sigma_0^*$ whose future trajectory visits $\cC$ infinitely many
times, we consider the same sequences $n_i<m_i<n_{i+1}$ of iterates
marking the crossings of $\cC$. From the above arguments, we
keep~\eqref{eq:NUSE00} and use~\eqref{eq:highest} to
replace~\eqref{eq:cancel} by the following
\begin{align}
  \label{eq:cancel1}
  \sum\nolimits_{j=n_i}^{m_i-1}\psi^{cu}(f^i(w))
  \le
  \|DY_3\|\cdot\tau\big(f^{n_i}w\big).
\end{align}
Now we can estimate
\begin{align}
  \sum\nolimits_{j=0}^{n_{i+1}-1}
  &\psi^{cu}(f^i(w))
  \le
  -\log\lambda_0\sum\nolimits_{k=0}^i(n_{k+1}-m_k)
    +\|DY_3\|\sum\nolimits_{k=0}^i(m_k-n_k)\nonumber
  \\
  &=\label{eq:estima0}
    -\log\lambda_0\cdot\#\{0\le j<n_{i+1}: \phi_1^j(w)\in
    \Sigma_0^*\setminus U\}
    +\|DY_3\|\sum\nolimits_{k=0}^i\tau\big(f^{n_k}w\big).
\end{align}
Since on $M\setminus\cC$ the time-$1$ map on $\Sigma_0^*$ coincides
with $P$, then we can recount the iterates of $\phi_1^j(w)$ through
the iterates $P^k(w)$: we set $\tau\mid_{\Sigma_0^*\setminus U}\equiv1$ and
$\ell(i):=i+\sum_{k=0}^i (n_{k+1}-m_k)$ the lap number, note that
$n_{i+1} = \sum_{k=0}^{\ell(i)}\tau(P^kw)$ and
rewrite~\eqref{eq:estima0} as
\begin{align*}
  -\log\lambda_0
  &\cdot\#\{0\le k<\ell(i): P^k(w)\in
    \Sigma_0^*\setminus U\}
  \\
  &+\|DY_3\|\sum\{\tau\big(P^kw\big): 0\le k<\ell(i): P^k(w)\in U\}
  \\
  &=
    \sum\nolimits_{j=0}^{\ell(i)}
    \Big(
    \big(-\log\lambda_0\mathbf{1}_{\Sigma_0^*\setminus U}
    +\|DY_3\|\mathbf{1}_U \big)\cdot
    \tau\Big)\circ P^j (w).
\end{align*}
Hence, for $w\in (U\setminus\{p\})\times\{0\}$ we can write
\begin{align}
  \frac1{n_{i+1}}
  &\sum_{j=0}^{n_{i+1}-1} \psi^{cu}(f^jw)
    \le \nonumber
    -\frac{\log\lambda_0}{n_{i+1}}
    \sum_{k=0}^{\ell(i)}\big(\mathbf{1}_{\Sigma^*_0\setminus U}\big)(P^kw)
    +
    \frac{\|DY_3\|}{n_{i+1}}
    \sum_{k=0}^{\ell(i)}\big(\tau\mathbf{1}_{U}\big)(P^kw)
    \\
  &=\label{eq:medias}
    -\frac{\ell(i)\log\lambda_0}{n_{i+1}}
    \frac1{\ell(i)}\sum_{k=0}^{\ell(i)}
    \big(\mathbf{1}_{\Sigma^*_0\setminus U}\big)(P^kw)
    +
    \frac{\ell(i)\|DY_3\|}{n_{i+1}}\frac1{\ell(i)}
    \sum_{k=0}^{\ell(i)}\big(\tau\mathbf{1}_{U}\big)(P^kw),
\end{align}
and by ergodicity of $\leb_\Sigma$ with respect to $P$, since
$\tau\mid_{\Sigma_0^*\setminus U}\equiv1$ and
\begin{align}\label{eq:intau}
  \frac{n_{i+1}}{\ell(i)}
  =
  \frac1{\ell(i)}\sum\nolimits_{k=0}^{\ell(i)}\tau(P^kw)
  \xrightarrow[i\to+\infty]{}\leb(\tau)=\int\tau\,d\leb_{\Sigma},
  \quad \leb_\Sigma-\text{a.e. } w\in\Sigma_0^*;
\end{align}
we arrive at
\begin{align*}
  \limsup_{i\to\infty}\frac1{n_{i+1}}
  \sum\nolimits_{j=0}^{n_{i+1}-1}\psi^{cu}(f^jw)
  \le
  -\frac{\log(\lambda_0)}{\leb(\tau)}
  (1-\leb_\Sigma(U))
  +
  \frac{\|DY_3\|}{\leb(\tau)}\leb_\Sigma(\tau\mathbf{1}_{U}).
\end{align*}
Finally, since $\tau\in L^1(\leb_\Sigma)$ and $U=B_\epsilon(p)$, we
can make $\leb_\Sigma(\tau\mathbf{1}_{U})=\int_U \tau\,d\leb_\Sigma$
as close to zero as needed.

Since trajectories eventually returning to a full
$\leb_\Sigma$-measure subset of $\Sigma_0^*$ form a full volume subset
of the ambient manifold $M$, we can thus conclude NU2SE as long as $U$
is small enough.

\subsubsection{Slow recurrence}
\label{sec:slow-recurr-equilibr}

To obtain a physical/SRB measure with full basin it is enough to
obtain slow recurrence according to Theorem~\ref{thm:discretefabv}. We
explore the invariance and ergodicity of $\leb_\Sigma$ with respect to
the Poincar\'e first return map $P$, the integrability of the
Poincar\'e first return time, together with the symmetry of the flow
on the cylinder $\cC$.

We use the equivalence between the SR condition~\eqref{eq:SR} and its
continuous version: on a positive volume subset of points, for every
$\epsilon>0$, we can find $r>0$ so that
\begin{align}
  \label{eq:SSR}
  \limsup_{T\nearrow\infty}\frac1T\int_0^T-\log d_r
  \big(\phi_t(x),\sing_\Lambda(G)\big) \, dt <\epsilon;
\end{align}
see~\cite[Theorem C]{ArSal25} for the proof of the stated equivalence.

In what follows we write
$\Delta_r(x):=-\log d_r \big(\phi_t(x),\{\sigma_1,\sigma_2\}\big)$ and
consider trajectories starting at a point $x\in\Sigma_0^*$ on a subset
with full $\leb_\Sigma$-measure, and claim that we can find a constant
$C>0$ such that for all small $r>0$
\begin{align}
  \label{eq:Lebergodic}
  \limsup_{T\nearrow\infty}\frac1T\int_0^T\Delta_r\big(\phi_t(x)\big) \, dt
  \le
  C\int_{\|u\|_2<r}\big((\log \|u\|_2)^2-(\log r)^2\big) d\lambda_2(u);
\end{align}
where $\lambda_2$ is the Lebesgue measure on
$\RR^2$. 
It is easy to see that the above expression tends to zero when
$r\to0+$, as we need.

\subsubsection*{Reduction to plane dynamics}

From Remark~\ref{rmk:symmetry}, each trajectory crossing $\cC$ is
contained in one vertical plane through the $z$-axis. We can assume,
without loss of generality, that we are dealing with a flow like $Y_0$,
whose trajectories are depicted in Figure~\ref{fig:H0}, to
estimate the value of the integral in~\eqref{eq:SSR}.

Considering $0<r\ll 1$, then trajectories outside of $\cC$ do not
contribute to the above integral --- we consider only those entering
$\cC$ through a small neighborhood $I_0=(-r,r)\times\{-2\}$ on
$\Sigma_{-2}$. We assume (without loss of generality) that from $I_0$
to the ball $B_r(\sigma_1)$ the flow is essentially tubular: starting
at $(x_0,-2)$ we will arrive at $B_r(\sigma_1)$ with the first
coordinate still equal to $x_0$. Likewise, between $B_r(\sigma_1)$ and
$B_r(\sigma_2)$ and from $B_r(\sigma_2)$ and $(x_0,2)$ we assume that
the flow is tubular.

From~\cite[Theorem 1.3]{Newhouse16} we can locally $C^1$ linearize the
flow around $\sigma_1$ in the ball $B_r(\sigma_1)$ (reducing the value
of $r>0$ if needed): there exists a $C^1$ diffeomorphism
$\zeta:B_r(\sigma_1)\to\RR^2$ so that
$\zeta(\phi_t(w))=e^{Dt}\zeta(w)$ for $w\in B_r(\sigma_1), t>0$ so
that $\phi_{[0,t]}(w)\subset B_r(0)$ and $D=\diag\{1/5,-1/5\}$.

Therefore, the distance $d(\phi_t(w),\sigma_1)$ can be estimated by
$\|e^{tD}\zeta(w)\|_2$ in the Euclidean norm, and so the integral
in~\eqref{eq:SSR} for a trajectory starting at the boundary of
$B_r(\sigma_1)$ can be calculated, writing $\zeta(w)=(x_0,r)$ with
$x_0\neq0$
\begin{align*}
  \int_0^t-\frac12\log\|e^{tD}\zeta(w)\|_2^2\,dt
  &=
  -\frac12\int_0^t\log(e^{2s/5}x_0^2+e^{-2s/5}r^2)\,ds
  \le
  -\frac12\int_0^t\log(e^{2s/5}x_0^2)\,ds
  \\
  &=
  -\int_0^t(s/5+\log |x_0|)\,ds
  =
  -t(t/10+\log |x_0|).
\end{align*}
The trajectory leaves $B_r(\sigma_1)$ before the time $t_0$ so that
$e^{t_0/5}|x_0|=r \iff t_0=5\log(r/|x_0|)$. Thus each trajectory crossing
$B_r(\sigma_1)$ contributes to the integral in~\eqref{eq:SSR} by at
most
$S=-t_0(t_0/10+\log |x_0|)=\frac52\big( (\log |x_0|)^2-(\log r)^2\big)$.

The second coordinate of $\zeta(\phi_{t_0}(w))$ at the exit from
$B_r(\sigma_1)$ is $e^{-t_0/5}r=x_0$ again. From $B_r(\sigma_1)$ to
$B_r(\sigma_2)$ we can likewise assume that the flow is tubular, and
repeat the calculation again when crossing $B_r(\sigma_2)$.

We thus obtain that at each crossing of $\cC$ starting at $(x_0,-2)$
we arrive at $(x_0,2)$ after a time $\tau(x_0,-2)$ and for some
constant $C>0$
\begin{align}
  \label{eq:S1}
  \int_0^{\tau(x_0,-2)}\Delta_r\big(\phi_t(w)\big)\,dt
  \le
  C\cdot (2 S) = 5 C \big( (\log |x_0|)^2-(\log r)^2\big).
\end{align}

\subsubsection*{Back to the dynamics on $M$}

We now consider a trajectory starting at $\leb$-generic point
$w\in\Sigma_0^*$ and crossing $\cC$ through $I_0$ at times
$t_n<T_n<t_{n+1}$ so that $P^{k_n}w=\phi_{t_n}w\in I_0$ and
$T_n=t_n+\tau(P^{k_n}w)$, where $P^{k_i}w$ are precisely those
iterates which fall in $I_0$.  At every visit to $B_r(z)\times{0}$ on
$N\times\{0\}$ the expression $|x_0|$ in the upper bound
from~\eqref{eq:S1} means $d(P^{k_n}w,z)$.  We can estimate as follows
\begin{align*}
  \int_0^{T_n}\Delta_r\big(\phi_s(w)\big)\,dt
  &=
    \sum\nolimits_{i=1}^n\int_{t_i}^{T_i}
    \Delta_r\big(\phi_s(w)\big)\,dt
  \\
  &\le
  5C\sum\nolimits_{i=1}^{k_n}
    \big((\log d(P^iw,z))^2-(\log r)^2\big)
    \cdot\mathbf{1}_{B_r(z)}(P^iw).
\end{align*}
Thus, we can estimate the average as
\begin{align*}
  \frac1{T_n}\int_0^{T_n}\Delta_r\big(\phi_s(w)\big)\,dt
  &\le
    \frac{5C\cdot k_n}{T_n}\cdot\frac1{k_n}
    \sum\nolimits_{i=1}^{k_n}
    \big((\log d(P^iw,z))^2-(\log r)^2\big)
    \cdot\mathbf{1}_{B_r(z)}(P^iw);
\end{align*}
but we also have $T_n=\sum_{i=0}^{k_n}\tau(P^iw)$ (recall the
definition of $\tau$ as the Poincar\'e time associated to the
Poincar\'e first return map) so that we
can use  the $P$-invariance and ergodicity of $\leb_\Sigma$ to get
\begin{align*}
  \limsup_{n\to\infty}
  \frac1{T_n}\int_0^{T_n}\Delta_r\big(\phi_s(w)\big)\,dt
  &\le
    \frac{5C}{\leb_\Sigma(\tau)}\int_{B_r(z)}
    \big( (\log d(w,z))^2-(\log r)^2\big)\,d\leb_\Sigma(w)
  \\
  &=
    \frac{5C}{\leb_\Sigma(\tau)}
    \int_{u\in B_r(0)\subset\RR^2}\big((\log \|u\|_2)^2-(\log
    r)^2\big)\,d\lambda_2(u),
\end{align*}
where $\lambda_2$ is the Lebesgue area measure on the Euclidean plane.

Given any strictly increasing and unbounded positive real sequence
$(s_m)_{m\ge1}$, we have the following two cases
\begin{description}
\item[$T_{n_m}<s_m<t_{n_m+1}$] we get the bound
  \begin{align*}
    \frac1{s_m}\int_0^{s_m}\Delta_r\big(\phi_t(w)\big)\,dt
    =
    \left(\frac{T_{n_m}}{s_m}\right)\cdot\frac1{T_{n_m}}
    \int_0^{T_{n_m}}\Delta_r\big(\phi_t(w)\big)\,dt
    \le
    \frac1{T_{n_m}}
    \int_0^{T_{n_m}}\Delta_r\big(\phi_t(w)\big)\,dt; 
  \end{align*}
\item[$t_{n_m}\le s_n<T_{n_m}$] we get the bound
    \begin{align*}
      \frac1{s_m}\int_0^{s_m}\Delta_r\big(\phi_t(w)\big)\,dt
    \le
    \left(\frac{T_{n_m}}{s_m}\right)\cdot\frac1{T_{n_m}}
    \int_0^{T_{n_m}}\Delta_r\big(\phi_t(w)\big)\,dt.
    \end{align*}
  \end{description}
Since $T_n=t_n+\tau(P^{k_m}w)>s_m\ge t_m$ then
\begin{align}\label{eq:quotient}
  \frac{T_{n_m}}{s_m}
  \le
  \frac{t_{n_m}+\tau(P^{k_m}w)}{t_{n_m}}
  =
  1+\frac{\tau(P^{k_m}w)}{t_{n_m}}
  =
  1+\frac{\tau(P^{k_m}w)/k_m}{\frac1{k_m}\sum_{i=0}^{k_m-1}\tau(P^iw)}.
\end{align}
For $\leb_\Sigma$-a.e. $w$, from $P$-invariance and ergodicity we have
\begin{align*}
  \frac1{k_m}\sum_{i=0}^{k_m-1}\tau(P^iw) \to \leb_\Sigma(\tau)
  \qand
  \frac{\tau(P^{k_m}w)}{k_m}\to 0;
\end{align*}
so that~\eqref{eq:quotient} tends to $1$ for large $m$.
Altogether, this shows that
\begin{align*}
  \limsup_{n\to\infty}
  \frac1{s_m}\int_0^{s_m}\Delta_r\big(\phi_t(w)\big)\,dt
  =
\limsup_{n\to\infty}
  \frac1{T_n}\int_0^{T_n}\Delta_r\big(\phi_s(w)\big)\,dt;
\end{align*}
completing the proof of~\eqref{eq:Lebergodic}.

\subsection{Higher co-dimensional NU2SE with mixed
  sectional-hyperbolic equilibria}
\label{sec:section-hyperb-equil}

We repeat the construction starting with the vector field $Y_2$
from~Subsection~\ref{sec:multid-ash-attract}, that is, we consider
\begin{align*}
  Y_4(\varrho \cos\theta, \varrho \sin\theta, z)
  :=
  H^\prime_y(\varrho,z)\cdot(\cos\theta,\sin\theta,0)-
  2\cdot H^\prime_x(\varrho,z)\cdot X.
\end{align*}
We note that the action of the flow $\phi_t$ of $Y_4$ on $2$-planes is
given by the additive compound $\wedge^{[2]}DY_4$: i.e. given that
$D\phi_t(w)$ is the solution of the Linear Variational Equation on
$\RR^3$
\begin{align*}
  Z'=DY_4(\phi_t(w))\cdot Z, \qquad Z_0=I_3:\RR^3\to\RR^3;
\end{align*}
then $\wedge^2D\phi_t(w)$ is the solution of
\begin{align*}
Z'=\wedge^{[2]}DY_4(\phi_t(w))\cdot Z, \qquad Z_0=I_3:\RR^3\to\RR^3;
\end{align*}
and we can use the following
\begin{align*}
  A =
  \begin{pmatrix} a_{11} & a_{12} & a_{13} \\ a_{21} & a_{22} &
  a_{23} \\ a_{31} & a_{32} & a_{33} \end{pmatrix}
\implies
\wedge^{[2]}A = \begin{pmatrix}
a_{11} + a_{22} & a_{23} & -a_{13} \\
a_{32} & a_{11} + a_{33} & a_{12} \\
-a_{31} & a_{21} & a_{22} + a_{33}
\end{pmatrix}                  
\end{align*}
(see e.g.~\cite{Muldowney1990} or~\cite{fiedler74,Zhang2013} for short
introductions, where $\wedge^{[2]}A$ is written $A^{[2]}$)
so that for $\varrho\le2$ we get $\diver(Y_4)\equiv 0$ and
\begin{align*}
  DY_4(x,y,z)
  =
  \begin{pmatrix}
    -z/5 & 0 & -x/5
    \\
    0 & -z/5 & -y/5
    \\
    6x/5&6y/5& 2z/5
  \end{pmatrix}                 
               \,\&\,
               \wedge^{[2]}DY_4(x,y,z)=
  \begin{pmatrix}
    -2z/5 & -y/5 & x/5
    \\
    6y/5 & z/5 &0
    \\
    -6x/5 &0& z/5
  \end{pmatrix}.
\end{align*}
Therefore, the hyperbolic equilibra $\sigma_{1,2}$ became
sectional-hyperbolic
\begin{align*}
  DY_4(0,0,\pm1)
  =
  \pm
  \begin{pmatrix}
    -1/5 & 0 & 0
    \\
    0 & -1/5 & 0
    \\
    0 & 0  & 2/5
  \end{pmatrix}                 
               \,\&\,
               \wedge^{[2]}DY_4(0,0,\pm1)=\pm
  \begin{pmatrix}
    -2/5 & 0&0
    \\
    0 & 1/5 &0
    \\
    0 &0& 1/5
  \end{pmatrix};
\end{align*}
with $\sigma_1$ generalized Rovella-like and $\sigma_2$ generalized
Lorenz-like.

\subsubsection{Asymptotical sectional-expansion}
\label{sec:asympt-section-expan-4}

From the arguments of Subsection~\ref{sec:poincare-transit-map-1},
since $Y_4$ is based on a symmetrization of $Y_2$, we recover
Claim~\ref{cl:trId} so that the Poincar\'e transition map on
$\Sigma_0^*$ is again the identity.

The upper bound from~\eqref{eq:highest} is kept with
$\|DY_4\|= \sup_{w\in \cC}\|DG_4(w)\|$ in the place of $\|DY_3\|$; and
so the same argument from Subsection~\ref{sec:asympt-section-expan-3}
shows that the flow of $G_4$ --- obtained from $X$ by attaching $Y_4$
as in~\eqref{eq:attached} --- satisfies NU2SE as long as $U$ is small
enough.

\subsubsection{Slow recurrence to equilibria}
\label{sec:slow-recurrence-}

The same argument of~\ref{sec:slow-recurr-equilibr} applies here, with
similar upper bound, to conclude the SR condition on a full volume
subset of $M$. Again, from Theorem~\ref{thm:discretefabv} we conclude
that there exists a unique physical/SRB measure whose basin covers
$\leb$-a.e. point of the ambient space.



\def\cprime{$'$}


\begin{thebibliography}{10}

\bibitem{araujo_2021}
V.~Araujo.
\newblock Finitely many physical measures for sectional-hyperbolic attracting
  sets and statistical stability.
\newblock {\em Ergodic Theory and Dynamical Systems}, 41(9):2706--2733, 2021.

\bibitem{araJSP21}
V.~Araujo.
\newblock On the statistical stability of families of attracting sets and the
  contracting {L}orenz attractor.
\newblock {\em Journal of Statistical Physics}, 182:53--68, 2021.

\bibitem{ArArbSal}
V.~Araujo, A.~Arbieto, and L.~Salgado.
\newblock Dominated splittings for flows with singularities.
\newblock {\em Nonlinearity}, 26(8):2391, 2013.

\bibitem{ArCerq}
V.~Araujo and J.~Cerqueira.
\newblock On robust expansiveness for sectional hyperbolic attracting sets.
\newblock {\em Moscow Mathematical Journal}, 23(1):11–46, Feb. 2023.

\bibitem{ArMel17}
V.~Araujo and I.~Melbourne.
\newblock Existence and smoothness of the stable foliation for sectional
  hyperbolic attractors.
\newblock {\em Bulletin of the London Mathematical Society}, 49(2):351--367,
  2017.

\bibitem{AraPac2010}
V.~Araujo and M.~J. Pacifico.
\newblock {\em Three-dimensional flows}, volume~53 of {\em Ergebnisse der
  Mathematik und ihrer Grenzgebiete. 3. Folge. A Series of Modern Surveys in
  Mathematics [Results in Mathematics and Related Areas. 3rd Series. A Series
  of Modern Surveys in Mathematics]}.
\newblock Springer, Heidelberg, 2010.
\newblock With a foreword by Marcelo Viana.

\bibitem{APPV}
V.~Araujo, M.~J. Pacifico, E.~R. Pujals, and M.~Viana.
\newblock Singular-hyperbolic attractors are chaotic.
\newblock {\em Transactions of the A.M.S.}, 361(5):2431--2485, 2009.

\bibitem{arsal2012a}
V.~Araujo and L.~Salgado.
\newblock Infinitesimal {L}yapunov functions for singular flows.
\newblock {\em Mathematische Zeitschrift}, 275(3-4):863--897, 2013.

\bibitem{ArSal25}
V.~Araújo, L.~Salgado, and S.~Sousa.
\newblock Physical measures for mostly sectionally expanding flows.
\newblock {\em Proceedings of the Edinburgh Mathematical Society}, page 1–60,
  Nov. 2025.

\bibitem{APRV24}
A.~Arbieto, M.~Pineda, E.~Rego, and K.~J. Vivas.
\newblock Contributions to the theory of asymptotically sectional hyperbolic
  flows.
\newblock {\em Preprint ar{X}iv}, 2210.12038, 2024.

\bibitem{ArbSal2011}
A.~Arbieto and L.~Salgado.
\newblock On critical orbits and sectional hyperbolicity of the nonwandering
  set for flows.
\newblock {\em Journal of Differential Equations}, 250(6):2927–2939, Mar.
  2011.

\bibitem{BondaLuz21}
C.~Bonatti and A.~da~Luz.
\newblock Star flows and multisingular hyperbolicity.
\newblock {\em Journal of the European Mathematical Society},
  23(8):2649–2705, Apr. 2021.

\bibitem{bruin2023mixing}
H.~Bruin and H.~H. Canales~Farías.
\newblock Mixing rates of the geometrical neutral lorenz model.
\newblock {\em Journal of Statistical Physics}, 190(12), Dec. 2023.

\bibitem{caomizou}
Y.~Cao, Z.~Mi, and R.~Zou.
\newblock {$C^1$} pesin (un)stable manifolds without domination.
\newblock {\em International Mathematics Research Notices}, 2025(19), Sept.
  2025.

\bibitem{Castro2011}
A.~Castro.
\newblock New criteria of generic hyperbolicity based on periodic points.
\newblock {\em Bull. Braz. Math. Soc.}, 251:3163--3201, 2011.

\bibitem{CatCerEnr15}
E.~Catsigeras, M.~Cerminara, and H.~Enrich.
\newblock The {P}esin entropy formula for {$C^1$} diffeomorphisms with
  dominated splitting.
\newblock {\em Ergodic Theory Dynam. Systems}, 35(3):737--761, 2015.

\bibitem{Crovetal2020}
S.~Crovisier, A.~da~Luz, D.~Yang, and J.~Zhang.
\newblock On the notions of singular domination and (multi-)singular
  hyperbolicity.
\newblock {\em Science China Mathematics}, 63(9):1721–1744, Aug. 2020.

\bibitem{fiedler74}
M.~Fiedler.
\newblock Additive compound matrices and an inequality for eigenvalues of
  symmetric stochastic matrices.
\newblock {\em Czechoslovak Mathematical Journal}, 24(3):392--402, 1974.

\bibitem{fisherHasselblatt12}
T.~Fisher and B.~Hasselblatt.
\newblock {\em Hyperbolic Flows}.
\newblock European Mathematical Society Publishing House, Zuerich, Switzerland,
  2019.

\bibitem{goodman83}
S.~Goodman.
\newblock {\em Dehn {S}urgery on {A}nosov Flows}, volume 1007, pages 300--307.
\newblock Springer, Berlin, 1983.

\bibitem{Goum07}
N.~Gourmelon.
\newblock Adapted metrics for dominated splittings.
\newblock {\em Ergodic Theory Dynam. Systems}, 27(6):1839--1849, 2007.

\bibitem{LP86}
R.~Labarca and M.~J. Pacifico.
\newblock {Stability of singular horseshoes}.
\newblock {\em {Topology}}, {25}:{337--352}, {1986}.

\bibitem{LY85}
F.~Ledrappier and L.-S. Young.
\newblock The metric entropy of diffeomorphisms: {P}art {I}: Characterization
  of measures satisfying {P}esin’s {E}ntropy {F}ormula.
\newblock {\em The Annals of Mathematics}, 122(3):509--539, Nov. 1985.

\bibitem{LeplYa17}
R.~Leplaideur and D.~Yang.
\newblock {SRB} measure for higher dimensional singular partially hyperbolic
  attractors.
\newblock {\em Annales de l'Institut Fourier}, 67(2):2703--2717, 2017.

\bibitem{SmartinVivas20}
B.~S. Mart{\'{\i}}n and K.~Vivas.
\newblock The {R}ovella attractor is asymptotically sectional-hyperbolic.
\newblock {\em NonLinearity}, 33(6):3036--3049, may 2020.

\bibitem{MeMor08}
R.~Metzger and C.~Morales.
\newblock Sectional-hyperbolic systems.
\newblock {\em Ergodic Theory and Dynamical System}, 28:1587--1597, 2008.

\bibitem{mtz001}
R.~J. Metzger.
\newblock {Sinai-Ruelle-Bowen measures for contracting Lorenz maps and flows}.
\newblock {\em {Ann. Inst. H. Poincar{\'e} Anal. Non Lin{\'e}aire}},
  {17}({2}):{247--276}, {2000}.

\bibitem{CMY2017}
Z.~Mi, Y.~Cao, and D.~Yang.
\newblock {SRB} measures for attractors with continuous invariant splittings.
\newblock {\em Mathematische Zeitschrift}, 288(1-2):135--165, Mar. 2017.

\bibitem{Morales07}
C.~A. Morales.
\newblock {Examples of singular-hyperbolic attracting sets}.
\newblock {\em {Dynamical Systems, An International Journal}},
  {22}({3}):{339{--}349}, {2007}.

\bibitem{MorSM17}
C.~A. Morales and B.~S. Martin.
\newblock Contracting singular horseshoe.
\newblock {\em Nonlinearity}, 30(11):4208--4219, oct 2017.

\bibitem{MPP99}
C.~A. Morales, M.~J. Pacifico, and E.~R. Pujals.
\newblock Singular hyperbolic systems.
\newblock {\em Proc. Amer. Math. Soc.}, 127(11):3393--3401, 1999.

\bibitem{MPP04}
C.~A. Morales, M.~J. Pacifico, and E.~R. Pujals.
\newblock {Robust transitive singular sets for 3-flows are partially hyperbolic
  attractors or repellers}.
\newblock {\em {Ann. of Math. (2)}}, {160}({2}):{375--432}, {2004}.

\bibitem{Muldowney1990}
J.~S. Muldowney.
\newblock Compound matrices and ordinary differential equations.
\newblock {\em Rocky Mountain Journal of Mathematics}, 20(4):857--872, Dec.
  1990.

\bibitem{Newhouse16}
S.~Newhouse.
\newblock On a differentiable linearization theorem of {P}hilip {H}artman.
\newblock In K.~A., Y.~Pesin, and F.~Hertz, editors, {\em Modern Theory of
  Dynamical Systems: A Tribute to Dmitry Victorovich Anosov}, volume 692 of
  {\em Contemporary Mathematics}, pages 209--262. Americam Mathematical
  Society, New York, 2017.

\bibitem{BOvadia}
S.~B. Ovadia and D.~Burguet.
\newblock Generalized $u$-gibbs measures for {$C^\infty$} diffeomorphisms.
\newblock {\em Preprint ar{X}iv}, 2506.18238, 2025.

\bibitem{Pe77}
Y.~B. Pesin.
\newblock {Characteristic {L}yapunov exponents and {S}mooth {E}rgodic
  {T}heory}.
\newblock {\em Russian Mathematical Surveys}, 32(4):55–114, Aug. 1977.

\bibitem{ManPom80}
Y.~Pomeau and P.~Manneville.
\newblock Intermittent transition to turbulence in dissipative dynamical
  systems.
\newblock {\em Communications in Mathematical Physics}, 74(2):189--197, 1980.

\bibitem{RegoVivas24}
E.~Rego and K.~J. Vivas.
\newblock On chaotic behavior of ash attractors.
\newblock {\em Moscow Mathematical Journal}, 24(1):41–61, 2024.

\bibitem{Ro93}
A.~Rovella.
\newblock {The dynamics of perturbations of the contracting Lorenz attractor}.
\newblock {\em {Bull. Braz. Math. Soc.}}, {24}({2}):{233--259}, {1993}.

\bibitem{Salgado19}
L.~Salgado.
\newblock Singular hyperbolicity and sectional {L}yapunov exponents of various
  orders.
\newblock {\em Proceedings of the American Mathematical Society},
  147(2):735--749, 2019.

\bibitem{tah04}
A.~Tahzibi.
\newblock Stably ergodic diffeomorphisms which are not partially hyperbolic.
\newblock {\em Israel Journal of Mathematics}, 142(1):315–344, Dec. 2004.

\bibitem{ST98}
D.~V. Turaev and L.~P. Shil{'}nikov.
\newblock {An example of a wild strange attractor}.
\newblock {\em {Mat. Sb.}}, {189}({2}):{137--160}, {1998}.

\bibitem{Zhang2013}
Y.~Zhang and B.~Zheng.
\newblock The stability criteria with compound matrices.
\newblock {\em Abstract and Applied Analysis}, 2013:1--5, 2013.

\end{thebibliography}

\end{document}